\newtheorem{thm}{Theorem}[section]
\newtheorem{lem}[thm]{Lemma}
\theoremstyle{definition}
\theoremstyle{remark}
\newtheorem{rem}{Remark}
\numberwithin{rem}{section}
\numberwithin{equation}{section}
\providecommand{\SL}{\operatorname{SL}}
\providecommand{\SL}{\operatorname{SL}}
\providecommand{\sym}{\operatorname{sym}}
\begin{document}

\title[Non-vanishing of Maass form $L$-functions ]{Non-vanishing of Maass form $L$-functions at the critical point}

\author[O. Balkanova]{Olga  Balkanova}
\address{Department of Mathematical Sciences, Chalmers University of Technology and the University of Gothenburg, SE-412 96 Gothenburg, Sweden}
\email{olgabalkanova@gmail.com}
\thanks{}

\author[B. Huang]{Bingrong Huang}
\address{School of Mathematical Sciences, Tel Aviv University, Tel Aviv, Israel}
\email{bingronghuangsdu@gmail.com}

\author[A. S\"{o}dergren]{Anders S\"{o}dergren}
\address{Department of Mathematical Sciences, Chalmers University of Technology and the University of Gothenburg, SE-412 96 Gothenburg, Sweden}
\email{andesod@chalmers.se}

\begin{abstract}
In this paper, we consider the family $\{L_j(s)\}_{j=1}^{\infty}$ of $L$-functions associated to an orthonormal basis $\{u_j\}_{j=1}^{\infty}$ of even Hecke-Maass forms for the modular group $\SL(2,\mathbb Z)$ with eigenvalues $\{\lambda_j=\kappa_{j}^{2}+1/4\}_{j=1}^{\infty}$. We prove the following effective non-vanishing result: At least $50 \%$ of the central values $L_j(1/2)$  with $\kappa_j \leq  T$ do not vanish as $T\rightarrow \infty$. Furthermore, we establish effective non-vanishing results in short intervals.
\end{abstract}

\keywords{Maass cusp forms; L-functions; non-vanishing; mollification.}
\subjclass[2010]{Primary: 11F67, 11F12.}
\thanks{The second author was supported  by the European Research Council, under the European Union's Seventh Framework Programme (FP7/2007-2013)/ERC grant agreement n$^{\text{o}}$~320755.
The third author was supported by a grant from the Swedish Research Council (grant 2016-03759).}

\maketitle

\tableofcontents


\section{Introduction}

The distribution of central values of automorphic $L$-functions is a well-studied topic in number theory with a number of important applications (see, e.g., the discussion in \cite{IS2}). In this paper we study the non-vanishing of central values of Maass form $L$-functions in the eigenvalue aspect. More precisely, we prove the following lower bound on the proportion of non-zero central $L$-values.
\begin{thm}\label{thm:main}
For any $\epsilon>0$ and for all sufficiently large $T$, we have
\begin{equation*}
\frac{\#\{u_j: L_j(1/2)\neq 0, \kappa_j\leq T \}}{\#\{u_j: \kappa_j\leq T \}} \geq \frac{1}{2}-\epsilon,
\end{equation*}
where $\{u_j\}_{j=1}^{\infty}$ is an orthonormal basis of even Hecke-Maass forms for the modular group $\SL(2,\mathbb Z)$ with eigenvalues $\{\lambda_j=\kappa_{j}^{2}+1/4\}_{j=1}^{\infty}$.
\end{thm}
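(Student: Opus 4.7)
The strategy I would adopt is the classical mollified moment method of Iwaniec--Sarnak, Kowalski--Michel, and Soundararajan, adapted to the spectral setting via the Kuznetsov trace formula. The first step is to introduce a Dirichlet polynomial mollifier
\[
M_j = \sum_{n \leq M} \frac{x_n \lambda_j(n)}{\sqrt{n}}, \qquad M = T^\theta,
\]
where $\lambda_j(n)$ is the $n$th Hecke eigenvalue of $u_j$ and the coefficients $x_n$ are modeled on the M\"obius pattern $x_n = \mu(n) P(\log(M/n)/\log M)$ for a polynomial $P$ with $P(0)=0$, to be optimized at the end. With positive harmonic weights $\omega_j = h(\kappa_j)/L(1, \sym^2 u_j)$ smoothly localized on $\kappa_j \asymp T$, the Cauchy--Schwarz inequality
\[
\Bigl(\sum_{j} \omega_j L_j(1/2) M_j\Bigr)^{2} \leq \Bigl(\sum_{L_j(1/2)\neq 0} \omega_j\Bigr) \cdot \sum_{j} \omega_j \bigl|L_j(1/2) M_j\bigr|^{2}
\]
reduces the problem to an asymptotic evaluation of a first and a second mollified moment. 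It then suffices to check that, with the optimal choice of $P$ and in the limit $\theta \to \infty$, the ratio of the square of the first moment to the product of the second moment with the total mass exceeds $1/2 - \epsilon$.

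To evaluate both moments I would substitute an approximate functional equation for $L_j(1/2)$ (and for $|L_j(1/2)|^{2}$ in the second moment), expand the mollifier, and apply the Kuznetsov trace formula to the resulting spectral sums $\sum_j \omega_j \lambda_j(m)\lambda_j(n)$. The Kuznetsov formula produces a diagonal term $\delta_{m=n}$ and an off-diagonal Kloosterman contribution $\sum_c S(m,n;c)\phi(\sqrt{mn}/c)/c$ weighted by the Bessel transform $\phi$ of $h$. In both moments the diagonal is evaluated via a standard Mellin--Barnes manipulation combined with the Dirichlet series $\sum_n x_n n^{-s}$ attached to the mollifier, yielding the expected main terms in closed form. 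The off-diagonal contribution is negligible for the first moment provided the mollifier is not too long, but for the second moment it becomes comparable to the diagonal once $M$ crosses the square-root threshold; in that regime it must be evaluated rather than merely bounded, and it supplies additional main-term contributions that are essential for reaching the proportion $1/2$.

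The central technical obstacle is therefore the evaluation of the off-diagonal part of the mollified second moment for mollifier lengths $M = T^\theta$ with $\theta$ as large as possible. Beyond the square-root barrier one must exploit genuine cancellation in the sum of Kloosterman sums, typically via an application of Kuznetsov in reverse (returning to the spectral side after Poisson summation on the modulus $c$) or through direct oscillation estimates for the Bessel transform of the chosen test function. The mollifier coefficients $x_n$ must be tracked explicitly through this analysis so that the final optimization over $P$ is possible; by the classical variational calculation, the optimal polynomial in the limit $\theta \to \infty$ is linear and the quotient of moments tends to $1/2$. Finally, the passage from the harmonically weighted spectral average back to the natural count $\#\{\kappa_j \leq T\}$ is handled using standard bounds $L(1, \sym^{2} u_j) = T^{o(1)}$ together with the Weyl law for $\{\kappa_j\}$, producing the proportion $1/2 - \epsilon$ claimed in the theorem.
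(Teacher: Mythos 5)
Your high-level framework—mollifier, Cauchy--Schwarz, Kuznetsov, harmonic weight removal—matches the paper, but the description of the hard step is not what the paper does, and it contains a substantive error about how the proportion $1/2$ is extracted.

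First, the paper does \emph{not} evaluate off-diagonal (Kloosterman) contributions to the twisted second moment as secondary main terms, and it does not go ``beyond the square-root barrier.'' Instead it quotes Motohashi's explicit spectral formula for $\sum_j \alpha_j t_j(\ell) L_j(1/2)^2 h(\kappa_j)$ (Lemma~\ref{lem:m2lh}), in which the single main term $R_1$ is essentially diagonal, while $R_2,\dots,R_7$ (the shifted-divisor sums and the continuous-spectrum integral with $|\zeta(1/2+ir)|^4$) are then \emph{bounded} using estimates of Ivic, Jutila and Motohashi. Because the family size ($\asymp T^2$ by Weyl's law) is comparable to the conductor of $L_j$, these error terms remain subdominant all the way up to $\ell \ll T^{2-\epsilon}$, which already accommodates a mollifier of length $T^\Delta$ with $\Delta$ arbitrarily close to $1$. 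That yields proportion $\Delta/(1+\Delta)\to 1/2$ without any extraction of off-diagonal main terms. Your suggested route—``application of Kuznetsov in reverse'' after Poisson summation on the moduli, to wring a secondary main term out of the Kloosterman sum—is the mechanism used in the Iwaniec--Sarnak level-aspect analysis, but it is simply not needed here and is not in the paper; proposing it without carrying it out leaves a genuine gap.

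Second, your statement that ``in the limit $\theta\to\infty$ \dots the quotient of moments tends to $1/2$'' is wrong. The proportion extracted from the Cauchy--Schwarz step is $\Delta/(1+\Delta)$, which tends to $1$ (not $1/2$) as the mollifier exponent $\Delta\to\infty$; the real constraint is $\Delta<1$, forced by the range of validity of the twisted second moment, and $1/2$ is the limiting value as $\Delta\uparrow 1$. Relatedly, your treatment of the harmonic weight removal is too optimistic: a uniform bound $L_j(1,\sym^2)=T^{o(1)}$ does not by itself let you replace the weighted count $\sum_{L_j\neq 0}\omega_j$ by the unweighted count at proportion $1/2-\epsilon$. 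The paper instead approximates $L_j(1,\sym^2)$ by a short Dirichlet polynomial via Perron's formula (Lemma~\ref{lem:appwxy}), controls the tail by Luo's large sieve (Lemma~\ref{wxyest}), and then verifies conditions on the mollified sums (Lemma~\ref{lem:wxy}) so that the harmonic weight can be removed up to an admissible error. You should make these steps explicit rather than asserting them as standard.
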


The corresponding problem for $L$-functions associated to holomorphic cusp forms has a rich history. Starting from the work of Duke \cite{Duk}, many non-vanishing results have been established for central values of cusp form $L$-functions both in the level \cite{BF2, Dja, IS, KM, R, V}  and in the weight aspects \cite{BF, Fom, IS, LT, Luo3}. On the other hand, for the family of Maass form $L$-functions (in the eigenvalue aspect) similar questions have received increasing attention in the recent years. Alpoge and Miller \cite{AM} investigated the low-lying zeros for this family, while Xu \cite{Xu} and Liu \cite{Liu} studied non-vanishing problems in short intervals.

Theorem \ref{thm:main} is the first effective non-vanishing result for Maass form $L$-functions.  Making a comparison with the holomorphic case, this is an analogue of \cite[Theorem 5]{IS} by Iwaniec and Sarnak. Just as in the case of holomorphic forms the problem is closely connected to the existence problem for Landau-Siegel zeros and any improvement of the percentage $50\%$ will imply the nonexistence of Landau-Siegel zeros (see \cite{IS}).

It is also possible to establish effective non-vanishing results in short intervals. To this end, it is required to estimate the fourth moment of the Riemann zeta function in short intervals (see the proof of Theorem \ref{thm: second moment} for details). Unconditional results for this moment are only known for intervals of length at least $T^{2/3+\epsilon}$.
In other ranges, we combine recent results of Bourgain \cite{Bour} and Bourgain-Watt \cite{BW}, obtaining the following theorem.
\begin{thm}\label{thm:main2}
For any $\epsilon>0$ and $H=T^{\beta}$ with $\beta\leq 1$ we have
\begin{equation}
\sum_{\substack{T\leq \kappa_j\leq T+H\\ L_j(1/2)\neq 0}}1\geq \left( \frac{\Delta}{1+\Delta}-\epsilon\right)\sum_{T\leq \kappa_j\leq T+H}1,
\end{equation}
where the summations are restricted to even Hecke-Maass forms and
\begin{equation}
\Delta\leq\min(1-\alpha;1/4+3/4\beta),\quad \theta:=13/84,
\end{equation}
\begin{equation}
\alpha:=\begin{cases}
0,\quad \beta \geq 2/3+\epsilon;\\
2\theta, \quad 1273/4053+\epsilon\leq \beta \leq 2/3+\epsilon;\\
4\theta, \quad \beta\leq 1273/4053+\epsilon.
\end{cases}
\end{equation}
\end{thm}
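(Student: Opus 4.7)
The plan is to adapt the mollification strategy underlying Theorem \ref{thm:main} to the short interval $[T,T+H]$ with $H=T^\beta$, and then extract the proportion via Cauchy-Schwarz. I would introduce a Dirichlet polynomial mollifier $M_j=\sum_{n\leq M}\lambda_j(n)x_n/\sqrt{n}$ with $M=T^\Delta$ and optimally chosen coefficients $x_n$ (of logarithmic type, as in Iwaniec-Sarnak \cite{IS}), and a smooth cutoff $h$ localizing $\kappa_j$ to $[T,T+H]$. The non-vanishing lower bound
\begin{equation*}
\sum_{\substack{T\leq \kappa_j\leq T+H\\ L_j(1/2)\neq 0}}1\;\geq\;\frac{\bigl(\sum_j h(\kappa_j)L_j(1/2)M_j\bigr)^{2}}{\sum_j h(\kappa_j)L_j(1/2)^{2}M_j^{2}}
\end{equation*}
then reduces the problem to computing the mollified first and second moments in short intervals.

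Next, I would evaluate the first mollified moment using the Kuznetsov trace formula (or the short-interval spectral large sieve) applied to the approximate functional equation for $L_j(1/2)$. The diagonal gives a main term of the expected size, and the off-diagonal Kloosterman contribution is controlled by Bessel integral estimates in the $\kappa$-aspect combined with the length $M\leq T^{\Delta}$ of the mollifier; the condition $\Delta\leq 1/4+3\beta/4$ is precisely what is needed to keep the off-diagonal negligible relative to the diagonal when the spectral window has width $H$. For the second mollified moment, the approximate functional equation for $L_j(1/2)^{2}$ and Kuznetsov produce a diagonal main term plus a contribution that, after opening $\lambda_j(n)\lambda_j(m)$ and applying Hecke multiplicativity, reduces (via the spectral mean-value machinery from the proof of Theorem \ref{thm: second moment}) to a short-interval fourth moment of the Riemann zeta function of the shape $\int_{T}^{T+H}|\zeta(1/2+it)|^{4}\,dt$, twisted by short Dirichlet polynomials.

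This is where the three-case distinction on $\beta$ enters. For $\beta\geq 2/3+\epsilon$ the unconditional fourth-moment bound of Ingham-type suffices, giving $\alpha=0$. For $1273/4053+\epsilon\leq\beta\leq 2/3+\epsilon$ I would invoke the Bourgain-Watt short-interval mean-value theorem \cite{BW}, losing one factor of $T^{2\theta}$ from the Kim-Sarnak exponent $\theta=13/84$ which enters through the pointwise bound on Hecke eigenvalues in the diagonal arithmetic sums. For the shortest range $\beta\leq 1273/4053+\epsilon$ I would use Bourgain's bound \cite{Bour} for sixth-moment-like estimates, absorbing two factors of $T^{2\theta}$ and producing $\alpha=4\theta$. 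In all cases, comparing with the squared first moment yields a ratio of the form $\Delta/(1+\Delta)$ with $\Delta\leq\min(1-\alpha,\,1/4+3\beta/4)$, matching the statement.

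The main obstacle is the second moment: balancing the mollifier length $\Delta$ against both the short-interval spectral error and the available fourth-moment bound for $\zeta$ on $[T,T+H]$. In particular, making the dependence on $\beta$ explicit, and tracking how the Kim-Sarnak exponent $\theta$ interacts with the choice of approximate functional equation split and with the off-diagonal Kloosterman analysis, is the delicate part; the two competing constraints in $\min(1-\alpha,\,1/4+3\beta/4)$ reflect exactly these two distinct sources of loss, and verifying that no extra loss appears in the diagonal main term computation is what ultimately pins down the stated value of $\Delta$.
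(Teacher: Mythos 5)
Your high-level plan — mollify, apply Cauchy--Schwarz, reduce the second moment to a short-interval fourth moment of $\zeta$, and invoke Bourgain and Bourgain--Watt — matches the paper's strategy. However, you misidentify the role of $\theta=13/84$ and consequently mischaracterize how the three cases for $\alpha$ actually arise. In the paper, $\theta$ is \emph{not} a Kim--Sarnak bound on Hecke eigenvalues (that exponent, $7/64$, appears separately as $d$ in Section~7); rather, $\theta=13/84$ is Bourgain's subconvexity exponent for $\zeta$ itself, i.e.\ $|\zeta(1/2+it)|\ll|t|^{\theta+\epsilon}$. The three cases come from three different ways of bounding $\int_T^{T+H}|\zeta(1/2+it)|^4\,dt$ inside the continuous-spectrum term $R_7(l;h)$ of Motohashi's explicit formula for the twisted second moment: for $\beta\geq 2/3+\epsilon$ one has the sharp Ingham-type fourth-moment bound ($\alpha=0$); for the middle range one writes $\int|\zeta|^4\leq\bigl(\sup|\zeta|^2\bigr)\int|\zeta|^2$ and uses Bourgain's pointwise bound together with the Bourgain--Watt short-interval second-moment estimate ($\alpha=2\theta$); for the shortest range one uses only the pointwise bound, giving $\int|\zeta|^4\ll HT^{4\theta}$ ($\alpha=4\theta$). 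Your description of the short range as involving ``sixth-moment-like estimates'' and ``absorbing two factors of $T^{2\theta}$'' from Hecke eigenvalues is therefore not what happens, and the middle range is not a loss from the diagonal arithmetic sums.

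Two further points worth flagging. First, the constraint $\Delta\leq 1/4+3\beta/4$ does not come from the off-diagonal in the \emph{first} mollified moment: Theorem~\ref{thm: first moment} already allows $l\ll T^{1-\epsilon}$ there, and the binding restriction enters through the error terms $(lT/H)^{1/2}$ and $(lT/G)^{1/2}$ in the twisted \emph{second} moment (Theorem~\ref{thm: second moment} and Remark~\ref{rem:sm}), which force $l\ll T^{1/2+3\beta/2-\epsilon}$ with $G\asymp H/\log T$ and hence $\Delta\leq 1/4+3\beta/4$. Second, the paper must also remove the harmonic weight $\alpha_j=2/L_j(1,\sym^2)$ (Section~6, via Luo's large sieve for $t_j(n^2)$) and pass from the smoothed spectral weight $\Omega(\kappa_j)$ back to the sharp cutoff $T\leq\kappa_j\leq T+H$ (Lemma~\ref{lem:removingsmoothing}, requiring $G\leq H/\log T$); your sketch elides both steps, and the second is not automatic since the smoothing has Gaussian tails outside $[T,T+H]$.
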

This is an effective version of the recent result of Liu \cite{Liu}.

The methods used in this paper are quite classical and consist of a precise evaluation of mollified first and second moments of the central $L$-values in the family of Maass forms. An important first step is to find asymptotic formulas for the twisted first and second moments in this family and we combine results by Ivic \cite{Ivic2}, Ivic-Jutila \cite{IJ}, Jutila \cite{Jut1}, Kuznetsov \cite{Kuz}, Li \cite{Li} and Motohashi \cite{Mot1} in order to prove such formulas (see Theorem \ref{thm: first moment} and Theorem \ref{thm: second moment}). The second main step is to apply the mollification techniques developed by Kowalski and Michel \cite{K, KM, KM2} in order to prove asymptotic formulas for the mollified moments (see Sections \ref{sec:remove} and \ref{sec:moll}).

\section{Notation and preliminary results}

The letter $\epsilon$ denotes an arbitrarily small positive real number whose value may change from one line to another. All implied constants may depend on $\epsilon$ (if applicable), but this will always be supressed in the notation.

Let $\gamma$ be Euler's constant and let $\tau(n)$ denote the number of positive divisors of the integer $n$. We denote by $\Gamma(s)$ the gamma function,  by $S(m,n;c)$ the classical Kloosterman sum and by $\zeta(s)$ the Riemann zeta function. We have
\begin{equation}\label{standard estimate}
 |\zeta(1+it)^{-1}|\ll \log(1+|t|). 
\end{equation}

Let $\{u_j\}_{j=1}^{\infty}$  be an orthonormal basis in the space of Maass cusp forms for $\SL(2,\mathbb Z)$, consisting of common eigenfunctions of all Hecke operators, the hyperbolic Laplacian and the reflection operator $R$ defined by $(Rf)(z)=f(-\bar z)$. We denote by
$t_{j}(n)$ the eigenvalue of the $n$th Hecke operator acting on $u_{j}$, by $\lambda_{j}=1/4+\kappa_{j}^2$ the eigenvalue of the hyperbolic Laplacian acting on $u_{j}$ and by $\epsilon_j\in\{\pm1\}$ the eigenvalue of $R$ acting on $u_j$.
Every element of this basis admits a Fourier expansion of the form
\begin{equation*}
u_{j}(x+iy)=\sqrt{y}\sum_{n\neq 0}\rho_{j}(n)K_{i\kappa_j}(2\pi|n|y)\exp(2\pi inx),
\end{equation*}
where $K_{\alpha}(x)$ is the $K$-Bessel function and
\begin{equation*}
\rho_{j}(n)=\rho_{j}(1)t_{j}(n) \qquad (n\geq1).
\end{equation*}

For each $u_j$, we define the corresponding $L$-function by
\begin{equation*}
L_j(s)=\sum_{n=1}^{\infty}t_j(n)n^{-s}, \quad \Re{s}>1.
\end{equation*}
The completed $L$-function
\begin{equation*}
\Lambda_j(s)=\pi^{-s}\Gamma\left( \frac{s+\delta+i\kappa_j}{2}\right)\Gamma\left( \frac{s+\delta-i\kappa_j}{2}\right)L_j(s),
\end{equation*}
where $\delta=0$ if $\epsilon_j=1$ and $\delta=1$ if $\epsilon_j=-1$, can be analytically continued to the whole complex plane and satisfies the functional equation
\begin{equation}\label{eq:fuctional}
\Lambda_j(s)=\epsilon_j \Lambda_j(1-s),
\end{equation}
where $\epsilon_j=\pm1$ is called the sign of the functional equation.

It follows from the results of Katok and Sarnak \cite[Corollary 1]{KS} that $L_{j}(1/2)\geq 0$.
Moreover, if $\epsilon_j=-1$ then the functional equation \eqref{eq:fuctional} forces $L_j(1/2)$ to be zero. Therefore, we will restrict our attention to $L$-functions with even functional equations. Kuznetsov \cite[Theorem 2.11]{Kuz1} proved that the number of even ($\epsilon_j=+1$) and the number of odd ($\epsilon_j=-1$) Maass forms with $\kappa_j\leq T$ are asymptotically equal. Hence we have the following Weyl law for the even Maass cusp forms:
\begin{equation}\label{eq:weyl}
\#\{u_j: \kappa_j\leq T \text{ and } \epsilon_j=1\}\sim \frac{T^2}{24}.
\end{equation}
In addition, we mention that Ivic \cite{Ivic2} (see also \cite{Jut1}) has proved the subconvexity estimate
\begin{equation}\label{eq:subcJutila}
L_j(1/2)\ll \kappa_j^{1/3+\epsilon}
\end{equation}
for any $\epsilon>0$.

The symmetric square $L$-function associated to $u_j$ is defined by
\begin{equation*}
L_j(s, \sym^2)=\zeta(2s)\sum_{n=1}^{\infty}\frac{t_j(n^2)}{n^s}, \quad \Re{s}>1.
\end{equation*}
The corresponding completed $L$-function
\begin{equation*}
\Lambda_j(s,\sym^2)=\frac{L_j(s, \sym^2)}{\pi^{3s/2}}\Gamma\left(\frac{s}{2}\right)\Gamma\left(\frac{s+i\kappa_j}{2}\right)\Gamma\left(\frac{s-i\kappa_j}{2}\right)
\end{equation*}
can be analytically continued to the whole complex plane and satisfies the functional equation
\begin{equation*}
\Lambda_j(s,\sym^2)=\Lambda_j(1-s,\sym^2).
\end{equation*}
The $L$-functions $L_j(s, \sym^2)$ appear in the normalizing coefficients
\begin{equation}\label{alphaj}
\alpha_{j}=\frac{|\rho_{j}(1)|^2}{\cosh({\pi \kappa_j})}=\frac{2}{L_j(1, \sym^2)}
\end{equation}
in Kuznetsov's trace formula. According to \cite{HL, Iwa}, we have
\begin{equation}\label{eq:boundsalphaj}
\kappa_j^{-\epsilon}\ll \alpha_j\ll \kappa_j^{\epsilon}
\end{equation}
for any $\epsilon>0$.

\begin{lem}(The Kuznetsov trace formula)
Let $g=g(t)$ be an even function, holomorphic in $\Im{t}\leq 1/2+\epsilon$ and such that $g(t)\ll (|t|+1)^{-2-\epsilon}$ for some $\epsilon>0$.
For integers $m,n\geq 1$, we have
\begin{multline}\label{eq:Kuznetsovtraceformula}
\sum_{\epsilon_j=1}\alpha_jt_j(m)t_j(n)g(\kappa_j)=-\frac{1}{\pi}\int_{-\infty}^{\infty}\frac{\sigma_{2it}(m)\sigma_{2it}(n)}{(mn)^{it}|\zeta(1+2it)|^2}g(t)dt\\+
\frac{\delta_{m,n}}{2}\frac{1}{\pi^2}\int_{-\infty}^{\infty}t\tanh(\pi t)g(t)dt \\
+\frac{1}{2}\sum_{q=1}^{\infty}\frac{1}{q}\left[S(m,n;q)H^{+}\left(\frac{4\pi\sqrt{mn}}{q}\right) +S(m,-n;q)H^{-}\left(\frac{4\pi\sqrt{mn}}{q} \right)\right],
\end{multline}
where $\sigma_{\alpha}(n)=\sum_{d|n}d^{\alpha}$,
\begin{equation*}
H^{+}(x)=\frac{2i}{\pi}\int_{-\infty}^{\infty}\frac{r}{\cosh(\pi r)}J_{2ir}(x)g(r)dr,
\end{equation*}
\begin{equation*}
H^{-}(x)=\frac{4}{\pi^2}\int_{-\infty}^{\infty}\sinh(\pi r)K_{2ir}(x)rg(r)dr,
\end{equation*}
and $J_{\alpha}(x)$ and $K_{\alpha}(x)$ denote the $J$-Bessel and the $K$-Bessel functions.
\end{lem}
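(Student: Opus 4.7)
The stated identity is the standard Kuznetsov trace formula projected onto the subspace of even Hecke--Maass forms, and the plan is to derive it by averaging the two classical Kuznetsov formulas for the full spectrum. Specifically, I would invoke (i) the same-sign formula for $m,n\geq 1$, whose spectral side is $\sum_j \alpha_j t_j(m)t_j(n)g(\kappa_j)$ summed over \emph{all} of the basis $\{u_j\}$ and whose geometric side consists of the Eisenstein contribution $-\tfrac{1}{\pi}\int \tfrac{\sigma_{2it}(m)\sigma_{2it}(n)}{(mn)^{it}|\zeta(1+2it)|^{2}}g(t)\,dt$, the diagonal term $\tfrac{\delta_{m,n}}{\pi^{2}}\int t\tanh(\pi t)g(t)\,dt$, and the Kloosterman piece $\sum_q q^{-1}S(m,n;q)H^{+}(4\pi\sqrt{mn}/q)$; and (ii) the opposite-sign formula, whose spectral side is $\sum_j \epsilon_j \alpha_j t_j(m)t_j(n)g(\kappa_j)$, with the same Eisenstein contribution, no diagonal term, and geometric side $\sum_q q^{-1}S(m,-n;q)H^{-}(4\pi\sqrt{mn}/q)$. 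Both formulas are classical (see for instance \cite{Kuz} and \cite{Mot1}), and the hypotheses on $g$ are precisely those required for their absolute convergence.

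The two ingredients that make the projection work are (a) the action of the reflection operator $R$ on the Fourier coefficients, $\rho_j(-n)=\epsilon_j\rho_j(n)$, which produces the extra weight $\epsilon_j$ on the spectral side of (ii) (that side naturally arises in the shape $\sum_j \rho_j(m)\rho_j(-n)g(\kappa_j)/\cosh(\pi\kappa_j)$), and (b) the $R$-invariance of the Eisenstein series $E(z,1/2+it)$, whose Fourier coefficients are therefore symmetric under $n\mapsto -n$ so that the Eisenstein contributions in (i) and (ii) coincide. Granted these, averaging (i) and (ii) replaces the spectral weight $1$ (respectively $\epsilon_j$) by $\tfrac{1+\epsilon_j}{2}$, which is the indicator of $\epsilon_j=1$ and therefore isolates exactly the even part of the spectrum. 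On the geometric side the averaging introduces a factor $\tfrac{1}{2}$ in front of the diagonal term and in front of each of the two Kloosterman sums, while the common Eisenstein contribution survives with coefficient one; this reproduces the stated identity.

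The main point of possible friction is notational rather than substantive: one must verify that the normalization of $\alpha_j$, the definitions of the Bessel transforms $H^{\pm}$, and the form of the Eisenstein Fourier coefficients used in the two parent formulas agree with those written in the lemma. These are routine checks to be performed against whichever reference is invoked for (i) and (ii); once they are done, the lemma follows from a single step of averaging.
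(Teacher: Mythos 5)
Your proposal is correct, and it coincides with the standard derivation of the ``even-form'' Kuznetsov formula by averaging the same-sign and opposite-sign Kuznetsov formulas, using $\rho_j(-n)=\epsilon_j\rho_j(n)$ to introduce the weight $\epsilon_j$ and the $R$-invariance of the Eisenstein series to match the continuous contributions; this is precisely the content behind the reference \cite[Eq.\ 3.17]{CI} that the paper cites in lieu of a proof. The bookkeeping you describe (factor $\tfrac{1}{2}$ on the diagonal and on each Kloosterman sum, coefficient $1$ on the Eisenstein term after moving it to the right) reproduces the displayed identity exactly.
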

\begin{proof}
See, for example, \cite[Eq.\ 3.17]{CI}.
\end{proof}

We will also need the following approximate functional equation.

\begin{lem}(The approximate functional equation)
We have
\begin{equation}\label{eq:approxfuneq}
L_{j}(1/2)=2\sum_{n\geq 1}\frac{t_{j}(n)}{n^{1/2}}V(n,\kappa_j),
\end{equation}
where, for any fixed $\delta>0$ and $G(x)=\exp(-x^4)$,
\begin{equation}\label{eq:vnt}
V(n,t)=\frac{1}{2\pi i}\int_{(\delta)}n^{-x}G(x)\gamma(x,t)\frac{dx}{x}
\end{equation}
and
\begin{equation*}
\gamma(x,t)=\pi^{-x}\frac{\Gamma(1/4+x/2-it/2)\Gamma(1/4+x/2+it/2)}{\Gamma(1/4-it/2)\Gamma(1/4+it/2)}.
\end{equation*}
\end{lem}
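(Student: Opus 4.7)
The plan is to derive the approximate functional equation by the classical Mellin--Barnes contour-shift trick. As a first step I would consider, for a sufficiently large real parameter $\delta>1/2$, the integral
\begin{equation*}
I := \frac{1}{2\pi i}\int_{(\delta)} \Lambda_j(1/2+x)\,G(x)\,\frac{dx}{x}.
\end{equation*}
On the line $\Re(x)=\delta$, the Dirichlet series for $L_j(1/2+x)$ converges absolutely. Factoring the gamma factor of $\Lambda_j(1/2+x)$ as $\pi^{-1/2}\Gamma(1/4+i\kappa_j/2)\Gamma(1/4-i\kappa_j/2)\cdot \gamma(x,\kappa_j)$, inserting the Dirichlet expansion, and interchanging summation with integration (justified by the super-exponential decay of $G(x)=\exp(-x^4)$ in vertical strips), I obtain
\begin{equation*}
I = \pi^{-1/2}\Gamma(1/4+i\kappa_j/2)\Gamma(1/4-i\kappa_j/2)\sum_{n\geq 1}\frac{t_j(n)}{n^{1/2}}V(n,\kappa_j),
\end{equation*}
which already recognizes the right-hand side of \eqref{eq:approxfuneq} up to the normalizing gamma factor.

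Next I would shift the contour from $\Re(x)=\delta$ to $\Re(x)=-\delta$. Since $\Lambda_j$ is entire and the only pole of the integrand is the simple pole of $1/x$ at the origin, the contour shift picks up a residue equal to $\Lambda_j(1/2)\,G(0)=\Lambda_j(1/2)$; the horizontal pieces vanish in the limit by polynomial growth of $\Lambda_j(1/2+x)$ combined with the rapid decay of $G$. Denoting by $I_{-}$ the resulting integral on $\Re(x)=-\delta$, the change of variables $x\mapsto -x$ gives
\begin{equation*}
I_{-} = -\frac{1}{2\pi i}\int_{(\delta)}\Lambda_j(1/2-x)\,G(-x)\,\frac{dx}{x}.
\end{equation*}
Since $G$ is even and, restricting to even Hecke--Maass forms, the functional equation \eqref{eq:fuctional} reads $\Lambda_j(1/2-x)=\Lambda_j(1/2+x)$, we conclude $I_{-}=-I$. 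Combining the two pieces yields $2I=\Lambda_j(1/2)$, and dividing through by the gamma factor produces exactly \eqref{eq:approxfuneq}.

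No real obstacle is anticipated: the entire argument hinges on (i) the entirety of $\Lambda_j$, (ii) the functional equation with sign $+1$ in the even case, (iii) the evenness of $G$, and (iv) the super-exponential decay of $G$ needed to justify both the term-by-term expansion and the contour shift. Each of these is either built into the setup (the choice $G(x)=\exp(-x^4)$ is made precisely to furnish the decay) or recorded earlier in the excerpt.
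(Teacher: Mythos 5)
Your proposal is correct and is precisely the standard Mellin--Barnes contour-shift derivation of the approximate functional equation, which is also what the paper's cited reference (Liu, Lemma 3) carries out: factor $\Lambda_j(1/2+x)$ to exhibit $\gamma(x,\kappa_j)$, expand the Dirichlet series and integrate term by term, shift to $\Re x=-\delta$ to collect the residue $\Lambda_j(1/2)$ at $x=0$, and invoke the functional equation together with the evenness of $G$ to fold the shifted integral back to $-I$. Nothing to add, except to note explicitly that the parity parameter in $\Lambda_j$ is $0$ here (even forms), so the gamma quotient matches $\gamma(x,\kappa_j)$ and $\epsilon_j=+1$ — which you do use, though it is worth flagging since the lemma as stated is specific to the even case.
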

\begin{proof}
See \cite[Lemma 3]{Liu}.
\end{proof}
Shifting the contour of integration in \eqref{eq:vnt} to $\Re{x}=A$, we obtain
\begin{equation}\label{eq:vntprop1}
V(n,t)\ll_A \left( \frac{t}{n} \right)^A
\end{equation}
for any $A>0$. Analogously, shifting the contour to $\Re{x}=-1/2+\epsilon$ for some $\epsilon>0$, yields
\begin{equation}\label{eq:vntprop2}
V(n,t)=1+O\left(\left(\frac{n}{t}\right)^{1/2-\epsilon}\right).
\end{equation}

\section{Choice of weight function}

Similarly to \cite[Eqs.\ (2.14), (3.2)]{IJ}, we define
\begin{multline}\label{eq:h}
h(r)=h(r,K,G)\\ =\frac{r^2+1/4}{r^2+1000}\left[\exp\left(-\left(\frac{r-K}{G}\right)^2\right)+\exp\left(-\left(\frac{r+K}{G}\right)^2 \right)\right]
\end{multline}
and
\begin{equation*}
\Omega(r)=\frac{1}{\sqrt{\pi} G}\int_{T}^{T+H}h(r,K,G)\,dK.
\end{equation*}
For an arbitrary $A>1$ and some $c>0$ we have (see \cite{IJ})
\begin{equation}\label{omega1}
\Omega(r)=1+O(r^{-A})\text{ if } T+cG\sqrt{\log T}<r<T+H-cG\sqrt{\log T},
\end{equation}
\begin{equation}\label{omega2}
\Omega(r)=O((|r|+T)^{-A})\text{ if }
r<T-cG\sqrt{\log T}\text{ or } r>T+H+cG\sqrt{\log T},
\end{equation}
and otherwise
\begin{equation}\label{omega3}
\Omega(r)=1+O(G^3(G+\min(|r-T|,|r-T-H|))^{-3}).
\end{equation}
In the next two sections we use the above functions, together with the Kuznetsov trace formula, to compute the smoothed moments
\begin{equation*}
M_n(l):=\frac{1}{\sqrt{\pi} G}\int_{T}^{T+H}M_n(l;h)\,dK, \quad n=1,2,
\end{equation*}
where
\begin{equation*}
M_n(l;h):=\sum_{\epsilon_j=1}\alpha_j t_j(l)L_{j}^{n}(1/2)h(\kappa_j).
\end{equation*}
\begin{lem}\label{lem:removingsmoothing}
Assume that $G\leq H/\log{T}$ and that for any $\delta>0$ we have
 \begin{equation}
\sum_{\substack{\epsilon_j=1\\L_j(1/2)\neq 0}}\Omega(\kappa_j)\geq (PN-\delta)\sum_{\epsilon_j=1}\Omega(\kappa_j),
\end{equation}
where $PN$ is a lower bound on the proportion of non-vanishing central $L$-values.
Then there exists $\delta_1=\delta_1(\delta)$ such that
 \begin{equation}
\sum_{\substack{T\leq \kappa_j\leq T+H\\\epsilon_j=1;L_j(1/2)\neq 0}}1\geq (PN-\delta_1)\sum_{\substack{T\leq \kappa_j\leq T+H\\\epsilon_j=1}}1.
\end{equation}
\end{lem}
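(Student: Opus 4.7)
The plan is to show that the $\Omega$-weighted sums differ from the sharp-cutoff sums over $T \leq \kappa_j \leq T+H$ by a negligible amount. By \eqref{omega1}--\eqref{omega3}, the weight $\Omega$ essentially coincides with the indicator function $\mathbf{1}_{[T, T+H]}$ except in transition zones of width $O(G\sqrt{\log T})$ near the two endpoints, and the assumption $G \leq H/\log T$ together with the Weyl law \eqref{eq:weyl} forces these zones to contain only a $O(1/\sqrt{\log T})$ fraction of the even $\kappa_j$ in $[T, T+H]$.

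Concretely, I would decompose $\R$ as $I_1 \cup I_2 \cup I_3$, where $I_1 = [T + cG\sqrt{\log T}, T+H - cG\sqrt{\log T}]$, $I_2$ is the union of the two transition zones, and $I_3$ is the tail. On $I_1$, \eqref{omega1} gives $\Omega(\kappa_j) = 1 + O(\kappa_j^{-A})$; on $I_2$, the crude bound $\Omega(\kappa_j) = O(1)$ (which follows from \eqref{omega3}, or directly from \eqref{eq:h}) suffices; and on $I_3$, \eqref{omega2} makes the contribution negligible. A short-interval version of \eqref{eq:weyl} (standard for intervals of length $\gg T^\epsilon$) gives $\#\{\epsilon_j = 1 : \kappa_j \in I_2\} = O(T G \sqrt{\log T}) = O(TH/\sqrt{\log T})$, while $\#\{\epsilon_j = 1 : T \leq \kappa_j \leq T+H\} \sim TH/12$.

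Writing $A_{\sharp}$, $B_{\sharp}$ for the sharp-cutoff counts appearing in the conclusion of the lemma, and $A_{\Omega}$, $B_{\Omega}$ for the corresponding $\Omega$-weighted sums, the above decomposition yields
\begin{equation*}
A_{\Omega} = A_{\sharp} + O\!\left( \frac{TH}{\sqrt{\log T}} \right), \qquad B_{\Omega} = B_{\sharp} + O\!\left( \frac{TH}{\sqrt{\log T}} \right).
\end{equation*}
Inserting these into the hypothesis $A_{\Omega} \geq (PN - \delta) B_{\Omega}$ and dividing by $B_{\sharp} \asymp TH$ gives
\begin{equation*}
A_{\sharp} \geq \left( PN - \delta - \frac{c}{\sqrt{\log T}} \right) B_{\sharp}
\end{equation*}
for some absolute constant $c > 0$. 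For $T$ sufficiently large (depending on $\delta$), this error is absorbed into a slightly enlarged $\delta_1 = \delta_1(\delta)$ with $\delta_1 \to \delta$, proving the lemma.

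The only real technical point is the short-interval Weyl law needed to bound $\#\{\epsilon_j = 1 : \kappa_j \in I_2\}$; but since we only require an upper bound (of the expected order of magnitude) for an interval of length $\gg T^\epsilon$ located around $T$, this follows from standard arguments, for instance by applying the Kuznetsov trace formula \eqref{eq:Kuznetsovtraceformula} with $g$ a suitable non-negative test function concentrated on $I_2$, or directly from the Weyl law with remainder. Hence no serious obstacle arises and the proof reduces to bookkeeping.
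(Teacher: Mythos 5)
Your proof is correct and follows essentially the same approach as the paper's: use \eqref{omega1}--\eqref{omega3} to note that $\Omega$ differs from the indicator of $[T,T+H]$ only in transition zones of width $O(G\sqrt{\log T})$ containing $O(TG\sqrt{\log T})$ eigenvalues, then observe that $G\leq H/\log T$ makes this error $O(TH/\sqrt{\log T})=o(TH)$, negligible against the Weyl-law count $\asymp TH$. The paper's argument is phrased slightly more compactly (it bounds the sharp cutoff from below by the $\Omega$-weighted sum minus the tail directly), but the content is the same.
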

\begin{proof}
Using the properties of the function $\Omega(r)$ and Weyl's law we infer
\begin{multline}
\sum_{\substack{T\leq \kappa_j\leq T+H\\\epsilon_j=1;L_j(1/2)\neq 0}}1\geq
\sum_{\substack{\epsilon_j=1\\L_j(1/2)\neq 0}}\Omega(\kappa_j)-\sum_{\substack{\kappa_j<T\text{ or }\kappa_j>T+H\\L_j(1/2)\neq 0}}\Omega(\kappa_j)\\
=\sum_{\substack{\epsilon_j=1\\L_j(1/2)\neq 0}}\Omega(\kappa_j)+O\left(TG\sqrt{\log{T}} \right)
\\=(PN-\delta)\sum_{\substack{T\leq \kappa_j\leq T+H\\\epsilon_j=1}}1+O\left(TG\sqrt{\log{T}} \right)
\geq(PN-2\delta)\sum_{\substack{T\leq \kappa_j\leq T+H\\\epsilon_j=1}}1
\end{multline}
provided that
$G\leq H/\log{T}.$
\end{proof}

\section{The twisted first moment}
The first moment $M_1(l)$ for $l=1$ and $H=T$ was  evaluated asymptotically by Ivic and Jutila in \cite{IJ}. The case $l>1$ can be analyzed similarly.
However, for our purposes it is more convenient to use an approximate functional equation of the form \eqref{eq:approxfuneq} together with the Kuznetsov trace formula \eqref{eq:Kuznetsovtraceformula} restricted to even Maass forms.

\begin{thm}\label{thm: first moment}
For any $\epsilon>0$, any $ G, H\ll T$ and any $l\ll T^{1-\epsilon}$, we have
\begin{equation*}
M_1(l)=\frac{2}{\pi^2\sqrt{l}}\int_{T}^{T+H}K\,dK+ O\left((H+G)T^{1/2+\epsilon}+\frac{HT}{\sqrt{l}} \left( \frac{l}{T}\right)^{1/2-\epsilon}\right).
\end{equation*}
\end{thm}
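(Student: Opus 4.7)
The plan is to open $L_j(1/2)$ via the approximate functional equation \eqref{eq:approxfuneq} and apply the Kuznetsov trace formula \eqref{eq:Kuznetsovtraceformula}, followed by the $K$-average. Using the rapid decay \eqref{eq:vntprop1} of $V$ to truncate the resulting $n$-sum at $n\ll T^{1+\epsilon}$ with negligible error, it suffices to analyse
\begin{equation*}
S_n := \sum_{\epsilon_j=1}\alpha_j t_j(l)t_j(n)g_n(\kappa_j),\qquad g_n(t):=V(n,t)h(t,K,G),
\end{equation*}
for each such $n$. The contour representation \eqref{eq:vnt} combined with Stirling shows that $g_n$ is holomorphic in $|\Im t|\le 1/2+\epsilon$ with the required decay, so Kuznetsov's formula applies.

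First I would isolate the main term, which comes from the diagonal $\delta_{m,n}$ piece in \eqref{eq:Kuznetsovtraceformula}: the Kronecker delta forces $n=l$, and averaging in $K$ over $[T,T+H]$ converts $h$ into $\Omega$ via its definition, giving
\begin{equation*}
\frac{1}{\pi^2\sqrt{l}}\int_{-\infty}^{\infty}t\tanh(\pi t)V(l,t)\Omega(t)\,dt.
\end{equation*}
Replacing $V(l,t)$ by $1$ with error $O((l/|t|)^{1/2-\epsilon})$ via \eqref{eq:vntprop2}, using the exponentially small error in $t\tanh(\pi t)-|t|$, and applying the localisation properties \eqref{omega1}--\eqref{omega3} of $\Omega$, one obtains the main term $\frac{2}{\pi^2\sqrt{l}}\int_T^{T+H}K\,dK$ together with the second error term $\frac{HT}{\sqrt{l}}(l/T)^{1/2-\epsilon}$ from the $V$-approximation.

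The Eisenstein contribution is handled by absolute-value bounds using \eqref{standard estimate} and the divisor bound $|\sigma_{2it}(m)|\le\tau(m)\ll m^\epsilon$, combined with $\int|\Omega(t)|\,dt\ll H+G$ (which follows directly from \eqref{omega1}--\eqref{omega3}):
\begin{equation*}
\mathrm{Eis}\ll T^\epsilon\sum_{n\ll T^{1+\epsilon}}\frac{1}{\sqrt{n}}(H+G)\ll (H+G)T^{1/2+\epsilon},
\end{equation*}
matching the first error term.

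The main obstacle is the Kloosterman contribution
\begin{equation*}
\sum_{n\ll T^{1+\epsilon}}\frac{1}{\sqrt{n}}\sum_{q\ge 1}\frac{1}{q}\Bigl[S(l,n;q)H^{+}\bigl(\tfrac{4\pi\sqrt{ln}}{q}\bigr)+S(l,-n;q)H^{-}\bigl(\tfrac{4\pi\sqrt{ln}}{q}\bigr)\Bigr],
\end{equation*}
where after the $K$-average the weight $h$ inside $H^{\pm}$ is effectively replaced by $\Omega$. For this I would invoke the stationary phase analysis of these Bessel transforms developed by Kuznetsov \cite{Kuz}, Motohashi \cite{Mot1}, Jutila \cite{Jut1} and Ivic \cite{Ivic2}: the $\Omega$-localisation restricts the $r$-integral to an interval of length $\ll H+G$ near $\pm T$, oscillation of $J_{2ir}$ and $K_{2ir}$ in $r$ produces decay in the Bessel argument $x=4\pi\sqrt{ln}/q$, and Weil's bound for Kloosterman sums secures convergence of the $q$-sum. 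Adding up the contributions bounds this by $(H+G)T^{1/2+\epsilon}$, and combining with the diagonal and Eisenstein contributions yields the asymptotic in the statement.
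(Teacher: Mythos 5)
Your proposal follows essentially the same route as the paper: open $L_j(1/2)$ with the approximate functional equation, truncate the $n$-sum via the rapid decay of $V$, apply Kuznetsov, extract the main term from the diagonal with $V(l,t)=1+O((l/t)^{1/2-\epsilon})$, bound the Eisenstein piece via \eqref{standard estimate} and trivial summation, and argue that the Kloosterman contribution is small because the Bessel argument $x=4\pi\sqrt{ln}/q\ll T^{1-\epsilon/4}$ stays well below the spectral localisation $\sim T$. The diagonal and Eisenstein parts of your sketch match the paper's in both mechanism and outcome.

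The one place to tighten up is the Kloosterman term, which is where the hypothesis $l\ll T^{1-\epsilon}$ actually earns its keep. Your sketch attributes the saving to a mix of stationary phase, $\Omega$-localisation, and Weil's bound, and claims the contribution is $(H+G)T^{1/2+\epsilon}$. In fact the correct statement is much stronger and also simpler: once $x\ll T^{1-\epsilon/4}$ and $g$ is supported spectrally near $T$, there is \emph{no} stationary point in the $r$-integral, and repeated integration by parts (equivalently, the explicit analysis of these transforms in Li's Propositions~4.1 and~5.1, which the paper invokes) gives $H^{\pm}(x)\ll T^{-A}$ for any $A$. After a crude truncation of the $q$-sum at some $T^B$, the whole Kloosterman contribution is therefore $O(T^{-A})$, no Weil bound needed. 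Your weaker bound of $(H+G)T^{1/2+\epsilon}$ would still fit inside the stated error term, so the theorem survives, but the argument you sketched (``stationary phase $+$ Weil'') would not by itself deliver even that bound if taken literally, since without exponential decay in $x$ the $n$- and $q$-sums are too large. The decisive input is the off-resonance exponential smallness of $H^{\pm}$ for small argument, and it is worth stating that explicitly and citing the relevant propositions rather than folding it into a generic ``stationary phase'' appeal.
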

\begin{proof}
The approximate functional equation \eqref{eq:approxfuneq} yields
\begin{equation*}
M_1(l;h)=2\sum_{n\geq 1}\frac{1}{n^{1/2}}\sum_{\epsilon_j=1}\alpha_j t_j(l)t_j(n)V(n,\kappa_j)h(\kappa_j).
\end{equation*}
At the cost of a negligible error term, we use \eqref{eq:vntprop1} to truncate the summation over $n$ to the range $n\ll T^{1+\epsilon/2}$.

Next, applying the Kuznetsov trace formula with $g(t):=V(n,t)h(t)$, we get
\begin{equation*}
M_1(l;h)=\sum_{i=1}^{4}F_{i}(l;h),
\end{equation*}
where
\begin{equation*}
F_1(l;h)=\frac{1}{\pi^2\sqrt{l}}\int_{-\infty}^{\infty}t\tanh(\pi t)V(l,t)h(t)\,dt,
\end{equation*}
\begin{equation*}
F_2(l;h)=-\frac{2}{\pi}\int_{-\infty}^{\infty}\frac{\sigma_{2it}(l)}{l^{it}}\sum_{n\ll T^{1+\epsilon/2}}\frac{\sigma_{2it}(n)}{n^{1/2+it}}\frac{V(n,t)h(t)}{|\zeta(1+2it)|^2}\,dt,
\end{equation*}
\begin{equation*}
F_3(l;h)=\sum_{n\ll T^{1+\epsilon/2}}\sum_{q\geq 1}\frac{S(l,n;q)}{\sqrt{n}q}H^{+}\left( \frac{4\pi \sqrt{ln}}{q}\right)
\end{equation*}
and
\begin{equation*}
F_4(l;h)=\sum_{n\ll T^{1+\epsilon/2}}\sum_{q\geq 1}\frac{S(l,-n;q)}{\sqrt{n}q}H^{-}\left( \frac{4\pi \sqrt{ln}}{q}\right).
\end{equation*}
The summand $F_1(l;h)$ contains the main term. More precisely, using \eqref{eq:vntprop2} we find that
\begin{equation*}
F_1(l;h)=\frac{2}{\pi^{3/2}\sqrt{l}}GK+O\left(\frac{GK}{\sqrt{l}}\left(\frac{l}{K}\right)^{1/2-\epsilon}\right).
\end{equation*}

Applying \eqref{standard estimate}, \cite[Eq.\ (3.5)]{IJ} and estimating the summation over $n$ trivially, the contribution of the continuous spectrum can be bounded as follows:
\begin{equation*}
\frac{1}{\sqrt{\pi}G}\int_{T}^{T+H}F_2(l;h)\,dK\ll (H+G)T^{1/2+\epsilon}.
\end{equation*}
Finally, we estimate the contribution of  $F_3(l;h)$ and $F_4(l;h)$ for $l\ll T^{1-\epsilon}$. Similarly to \cite[p.\ 7]{IJ}, the summation over $q$ can be truncated to the range $q\ll T^B$, for some $B>1$, at the cost of a negligible error term. Then it remains to analyze the integral transforms $H^{\pm}(x)$ for $x=4\pi\sqrt{ln}/q\ll T^{1-\epsilon/4}$. This has been done in Li's paper \cite[Sections 4 and 5]{Li}.
In particular,  Propositions $4.1$ and $5.1$ of \cite{Li} imply that for $x\ll T^{1-\epsilon/4}$ and for any fixed $A>0$ we have
$H^{\pm}(x)\ll T^{-A}$. Therefore, the contributions of  $F_3(l;h)$ and $F_4(l;h)$ are negligibly small.
\end{proof}

\section{The twisted second moment}

In this section we prove the following asymptotic formula.
\begin{thm}\label{thm: second moment} For any $\epsilon>0$, $l\geq 1$, $T^{\epsilon}\ll H\leq T$, $T^{\epsilon}\ll G\ll T^{1-\epsilon}$
the following asymptotic formula holds
\begin{multline}\label{second moment asymp}
M_2(l)=\frac{4}{\pi^2}\frac{\tau(l)}{\sqrt{l}}\int_{T}^{T+H}K\big(\gamma-\log(2\pi\sqrt{l})+\log{K}\big)\,dK\\
+O\left((lT)^{\epsilon}\left[\frac{G^2H}{l^{1/2}T}+T^{\alpha}(H+G)+\frac{l^{3/2}H}{TG^2}+\left(\frac{lT}{H}\right)^{1/2}+\left(\frac{lT}{G}\right)^{1/2}\right]\right),
\end{multline}
where for $\theta=13/84$ we define
\begin{equation}\label{eq:choiceofalpha}
\alpha:=\begin{cases}
0,\quad H\gg T^{2/3+\epsilon};\\
2\theta, \quad T^{1273/4053+\epsilon}\ll H\ll T^{2/3+\epsilon};\\
4\theta, \quad H\ll T^{1273/4053+\epsilon}.
\end{cases}
\end{equation}
\end{thm}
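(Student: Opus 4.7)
The strategy parallels the proof of Theorem \ref{thm: first moment}, but with the approximate functional equation applied twice. The plan is to write
\begin{equation*}
M_2(l;h) = 4\sum_{m,n\geq 1}\frac{1}{\sqrt{mn}}\sum_{\epsilon_j=1}\alpha_j t_j(l)t_j(m)t_j(n)V(m,\kappa_j)V(n,\kappa_j)h(\kappa_j),
\end{equation*}
truncate to $m,n\ll T^{1+\epsilon}$ via \eqref{eq:vntprop1} at negligible cost, and expand $t_j(l)t_j(m)=\sum_{d\mid(l,m)}t_j(lm/d^2)$ so that each inner sum has the shape $\sum_j\alpha_j t_j(N)t_j(n)g(\kappa_j)$ with $g(t)=V(m,t)V(n,t)h(t)$. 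Kuznetsov's formula \eqref{eq:Kuznetsovtraceformula} then decomposes $M_2(l;h)$ into a diagonal piece, a continuous-spectrum piece, and a Kloosterman piece; the $K$-averaging is then carried out.

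The main term arises from the diagonal $\delta_{lm/d^2,n}$. Parametrising the constraint by $d\mid l$, $m=dm'$, $n=(l/d)m'$, this piece reduces to $\frac{4}{\pi^2\sqrt{l}}\sum_{d\mid l}\sum_{m'\geq 1}\frac{1}{m'}\int t\tanh(\pi t)V(dm',t)V((l/d)m',t)h(t)\,dt$. Representing each $V$ by its Mellin integral \eqref{eq:vnt}, the sum over $m'$ produces a factor $\zeta(1+x+y)$. Shifting the contour past its simple pole and using the Laurent expansion $\zeta(1+w)=1/w+\gamma+O(w)$ together with the expansion of $\gamma(x,t)$ at the origin, one extracts, after integration in $K$, the asserted main term $\frac{4\tau(l)}{\pi^2\sqrt{l}}\int_T^{T+H}K(\gamma-\log(2\pi\sqrt l)+\log K)\,dK$, with $\tau(l)=\sum_{d\mid l}1$ produced by the trivial sum over divisors.

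The continuous-spectrum term is where the real difficulty lies. After expanding $t_j(l)t_j(m)$ the integrand on the Eisenstein side is $\sigma_{2it}(lm/d^2)\sigma_{2it}(n)\cdot(lm/d^2)^{-it}n^{-it}\cdot V(m,t)V(n,t)h(t)/|\zeta(1+2it)|^2$. Summing the Dirichlet series in $m$ and $n$ yields, via Mellin analysis and the identity $\sum_k\sigma_{2it}(k)k^{-s}=\zeta(s)\zeta(s-2it)$, a factor essentially of the form $|\zeta(1/2+it)|^4$; combined with \eqref{standard estimate} and the support properties of $\Omega$, this reduces the estimation to bounding $\int_{T-cG\sqrt{\log T}}^{T+H+cG\sqrt{\log T}}|\zeta(1/2+it)|^4\,dt$ in a short window. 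The classical fourth-moment bound $\ll HT^\epsilon$ is available only for $H\gg T^{2/3+\epsilon}$, and for shorter windows one must invoke Bourgain \cite{Bour} and Bourgain--Watt \cite{BW}, whose subconvex exponent $\theta=13/84$ produces the weaker estimates $HT^{2\theta+\epsilon}$ and $HT^{4\theta+\epsilon}$; this dictates precisely the case analysis for $\alpha$ in \eqref{eq:choiceofalpha} and accounts for the term $T^{\alpha}(H+G)$ in the error.

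For the Kloosterman piece I would truncate the $q$-sum to $q\ll T^B$ at negligible cost (as in \cite{IJ}) and analyse the Bessel transforms $H^{\pm}(4\pi\sqrt{lmn}/q)$ via the stationary-phase calculations of \cite[Sections 4--5]{Li} and Motohashi's formula \cite{Mot1}. When the argument is small relative to $T$ the transforms are $O(T^{-A})$; the transitional range then contributes the remaining error terms $G^2H/(l^{1/2}T)$, $l^{3/2}H/(TG^2)$, $(lT/H)^{1/2}$ and $(lT/G)^{1/2}$, which reflect the trade-off between $l$, the window $H$ and the smoothing scale $G$. Combining the three contributions yields \eqref{second moment asymp}. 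The main obstacle is clearly the short-interval fourth moment of $\zeta$ on the critical line, which forces the piecewise dependence on $\beta$ (via $H=T^\beta$) and is the sole reason the unconditional statement is not uniform in $H$.
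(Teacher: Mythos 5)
Your proposal differs structurally from the paper's proof. The paper does not apply the approximate functional equation and Kuznetsov's formula directly for $M_2$; instead it imports the explicit spectral decomposition of the twisted second moment (Lemma~\ref{lem:m2lh}, essentially Motohashi/Ivic), which writes $M_2(l;h)=\sum_{v=1}^{7}R_v(l;h)$ and encapsulates in closed form the diagonal term $R_1$, the continuous-spectrum term $R_7$, and, crucially, the off-diagonal shifted-convolution terms $R_2,\dots,R_6$ involving $\tau(m)\tau(m\pm l)$ weighted by the special functions $\Psi^{\pm}$. Your sketch of the diagonal (extraction of $\tau(l)$ and the $\gamma-\log(2\pi\sqrt l)+\log K$ factor) and of the continuous spectrum (reduction to a short-interval fourth moment of $\zeta$ on the critical line, necessitating Bourgain and Bourgain--Watt and yielding the $\alpha$-split) is accurate in spirit and mirrors the paper's treatment of $R_1$ and $R_7$.

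However, there is a genuine gap in your treatment of the Kloosterman piece, and it is precisely the hard part. You write that, as in the first-moment proof, the transforms $H^{\pm}(x)$ are $O(T^{-A})$ because the argument is small relative to $T$. That is true for $M_1$, where $x=4\pi\sqrt{ln}/q\ll T^{1-\epsilon/4}$. For $M_2$, after the Hecke expansion $t_j(l)t_j(m)=\sum_{d\mid(l,m)}t_j(lm/d^2)$, the Kuznetsov geometric side has Bessel arguments $4\pi\sqrt{(lm/d^2)n}/q$ with $m,n\ll T^{1+\epsilon}$, which can be of size $\gg T$ and indeed much larger. In that regime $H^{\pm}$ is far from negligible; it is exactly the source of the oscillatory off-diagonal terms, and taming them requires Voronoi summation, stationary phase, and the shifted-convolution structure captured by $R_2,\dots,R_6$ (and in particular by Jutila's asymptotic for $\Psi^+$ and Ivic's bounds for $\Psi^-$). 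Your claim that the Kloosterman piece merely contributes the remaining error terms via a "transitional range" is therefore not justified as stated: without the explicit formula, you would need to carry out the Voronoi/shifted-convolution analysis yourself, and you have not indicated how. Relatedly, your attribution of the individual error terms to the Kloosterman piece is off: in the paper $G^2H/(l^{1/2}T)$ is the error from $R_1$, $l^{3/2}H/(TG^2)$ comes from $R_4$, and $(lT/H)^{1/2}+(lT/G)^{1/2}$ comes from $R_2$, while $T^{\alpha}(H+G)$ is $R_7$.
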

\begin{rem}\label{rem:sm}
Note that the error term in \eqref{second moment asymp} is less than the main term as long as
$$l\ll T^{-\epsilon}\min \left(T^{2-2\alpha}, TG, \frac{H^2}{G^2}T^{2-2\alpha}, H^{3/2}T^{1/2}, HT^{1/2}G^{1/2}\right).$$
In view of restrictions in Lemma \ref{lem:removingsmoothing}, the optimal choice of $G$ is $G=H/\log{T}$. Taking $H:=T^{\beta}$, we finally obtain
\begin{equation}\label{eq:restrictiononl}
l\ll T^{-\epsilon}\min \left(T^{2-2\alpha},T^{1/2+3/2\beta}\right).
\end{equation}
\end{rem}
In order to prove Theorem \ref{thm: second moment} we apply an explicit formula for the second twisted moment. Let
\begin{equation*}
\hat{h}(s)=\int_{-\infty}^{\infty}rh(r)\frac{\Gamma(s+ir)}{\Gamma(1-s+ir)}\,dr
\end{equation*}
and note that $\hat{h}(s)$ is an entire function.
Furthermore, for $-3/2<c<1/2$, we define the special functions
\begin{equation*}
\Psi^{+}(x;h)=\int_{(c)}\Gamma(1/2-s)^2\tan(\pi s)\hat{h}(s)x^s\,ds
\end{equation*}
and
\begin{equation*}
\Psi^{-}(x;h)=\int_{(c)}\Gamma(1/2-s)^2\frac{\hat{h}(s)}{\cos(\pi s)}x^s\,ds.
\end{equation*}

\begin{lem}\label{lem:m2lh}
Let $l\geq1$. Then we have
\begin{equation}\label{m2lh}
M_2(l;h)=\sum_{v=1}^{7}R_{v}(l;h),
\end{equation}
where
\begin{equation*}
R_1(l;h)=\frac{2}{\pi^{3}i}\frac{\tau(l)}{\sqrt{l}}\left[\big(\gamma -\log(2\pi \sqrt{l})\big)(\hat{h})'(1/2)+1/4(\hat{h})''(1/2)\right],
\end{equation*}

\begin{equation*}
R_2(l;h)=\frac{1}{\pi^{3}}\sum_{m=1}^{\infty}\frac{\tau(m)\tau(m+l)}{\sqrt{m}}\Psi^{+}(m/l;h),
\end{equation*}

\begin{equation*}
R_3(l;h)=\frac{1}{\pi^{3}}\sum_{m=1}^{\infty}\frac{\tau(m)\tau(m+l)}{\sqrt{m+l}}\Psi^{-}(1+m/l;h),
\end{equation*}

\begin{equation*}
R_4(l;h)=\frac{1}{\pi^{3}}\sum_{m=1}^{l-1}\frac{\tau(m)\tau(l-m)}{\sqrt{m}}\Psi^{-}(m/l;h),
\end{equation*}

\begin{equation*}
R_5(l;h)=-\frac{1}{2\pi^{3}}\frac{\tau(l)}{\sqrt{l}}\Psi^{-}(1;h),
\end{equation*}

\begin{equation*}
R_6(l;h)=-\frac{12i}{\pi^{2}}\sigma_{-1}(l)\sqrt{l}h'(-i/2),
\end{equation*}

\begin{equation*}
R_7(l;h)=-\frac{1}{\pi}\int_{-\infty}^{\infty}\frac{|\zeta(1/2+ir)|^4}{|\zeta(1+2ir)|^2}\frac{\sigma_{2ir}(l)}{l^{ir}}h(r)\,dr.
\end{equation*}
\end{lem}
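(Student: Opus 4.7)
The plan is to derive \eqref{m2lh} as a twisted analogue of Motohashi's explicit formula for the fourth moment of the Riemann zeta function, following the framework of Kuznetsov, Motohashi and Jutila adapted to the Hecke--Maass setting. The starting point is to expand $L_j(1/2)^2$ via the approximate functional equation \eqref{eq:approxfuneq}, giving
$$L_j(1/2)^2 = 4 \sum_{m,n \geq 1} \frac{t_j(m) t_j(n)}{\sqrt{mn}} V(m, \kappa_j) V(n, \kappa_j).$$
Multiplying by $\alpha_j t_j(l) h(\kappa_j)$, summing over the even spectrum and applying the Hecke multiplicativity relation $t_j(l) t_j(m) = \sum_{d \mid (l,m)} t_j(lm/d^2)$ collapses the triple product into sums of the shape $\sum_{\epsilon_j = 1} \alpha_j t_j(k) t_j(n) g(\kappa_j)$ with weight $g(t) = V(m, t) V(n, t) h(t)$, so that the Kuznetsov trace formula \eqref{eq:Kuznetsovtraceformula} becomes directly applicable.

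Next I would feed the resulting sums into \eqref{eq:Kuznetsovtraceformula}. The diagonal term, combined with the $V$-factors and the divisor structure, reduces to a one-variable Mellin--Barnes integral whose integrand is essentially an Estermann-type Dirichlet series; shifting the contour and picking up residues at the double pole at $s = 1/2$ yields the main contribution $R_1(l;h)$, while a lower-order polar contribution supplies $R_6(l;h)$ (explaining the appearance of $h'(-i/2)$ as a boundary value). The continuous spectrum integral in \eqref{eq:Kuznetsovtraceformula} assembles directly into $R_7(l;h)$, because summing the Eisenstein Fourier coefficients $\sigma_{2ir}(m)\sigma_{2ir}(n)/(mn)^{ir}$ against the two $V$-factors produces, after swapping Mellin contours, the factor $|\zeta(1/2+ir)|^4$. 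Finally, the two Kloosterman sum series, once opened via the Estermann decomposition of the shifted convolution $\sum_m \tau(m) \tau(m \pm l)$, are matched through the appropriate Bessel--Mellin transform to a $\Psi^\pm$-transform acting on $\hat h$; this identifies the Kloosterman contribution with the shifted divisor sums $R_2$, $R_3$, $R_4$, together with the diagonal correction $R_5$ arising from the exceptional configurations $m = 0$ or $m = l$.

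The main obstacle is the Kloosterman piece: one must recognize that $H^\pm$, after being integrated against the shifted divisor sum and inverted back through the Bessel--Mellin transform, precisely reconstructs a $\Psi^\pm$-transform of $\hat h$. This requires keeping careful track of which poles of the Estermann zeta function $\sum_m \tau(m) \tau(m+l) m^{-s}$ are swept across when moving contours, and correctly separating those absorbed into $R_1, R_6$ from those generating the correction $R_5$. The contour restriction $-3/2 < c < 1/2$ in the definition of $\Psi^\pm$ is forced precisely by the need to stay below the double pole of the Estermann series at $s = 1/2$ but above the gamma-factor poles produced by the functional equation reflection. In practice, this bookkeeping can be short-circuited by invoking the explicit formula of Motohashi--Jutila from \cite{Mot1, Jut1} and verifying term by term that the constants and gamma-factor normalizations match the statement.
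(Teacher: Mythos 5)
The paper does not prove this lemma at all: it is stated as a citation to Motohashi's book (Lemma 3.8) and Ivic's paper (p.~457), where it appears as a twisted version of Motohashi's explicit formula. Your proposal to rederive it from scratch is therefore already a different route, but more importantly it has a structural gap that would prevent it from yielding the formula in the stated form.

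The difficulty is the use of the approximate functional equation \eqref{eq:approxfuneq} as the starting point. After squaring and summing against $\alpha_j t_j(l) h(\kappa_j)$, the Kuznetsov trace formula must be applied with the weight $g_{m,n}(t) = V(m,t)V(n,t)h(t)$, which depends on the summation variables $m,n$ through the two $V$-factors. Consequently the Bessel transforms $H^\pm$ in \eqref{eq:Kuznetsovtraceformula} do not collapse to the clean transform $\hat h(s)$ of the bare weight $h$, nor to the fixed special functions $\Psi^\pm(\cdot\,;h)$ appearing in the statement; instead one obtains a two-parameter family of transforms carrying the $\gamma(x,t)$-factors from \eqref{eq:vnt}. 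To recover the lemma one would then have to undo the AFE by re-introducing and re-shifting the Mellin contours that produced it in the first place. The actual derivation in Motohashi and Ivic avoids this entirely: one works with $\sum_j \alpha_j t_j(l) H_j(u) H_j(v) h(\kappa_j)$ for $\Re u,\Re v$ large, so that the Kuznetsov weight is just $h$, and then analytically continues to $u=v=1/2$ by passing through the functional equation of the Estermann zeta function. The residue terms $R_1$, $R_5$, $R_6$ all arise from poles crossed during this continuation. Your attribution of $R_6$ to ``a lower-order polar contribution'' of the diagonal term is also misleading: the appearance of $h'(-i/2)$ signals a residue at the spectral point $r = -i/2$ (corresponding to the trivial eigenvalue $\lambda = 0$), not to a pole of the Estermann series hit in the diagonal contribution. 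Your general framework — Hecke multiplicativity, Kuznetsov trace formula, Estermann decomposition of the shifted convolution, Mellin analysis of the Bessel transforms — is the right toolbox, and your suggestion at the end to fall back on citing Motohashi--Jutila is in fact exactly what the paper does; but as a self-contained derivation the AFE starting point would need to be replaced by the analytic-continuation argument to produce the stated identity.
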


\begin{proof}
See, for example, \cite[Lemma 3.8]{Mot1} or \cite[p. 457]{Ivic2}.
\end{proof}

Note that our choice of  weight function $h(\kappa_j)$ in \eqref{eq:h} is slightly different from the one of \cite{Ivic2, Jut1, Mot1}. In particular, in the argument below it is required to divide all estimates from \cite{Ivic2, Jut1, Mot1} by $K^2$.

\begin{proof}[Proof of Theorem \ref{thm: second moment}]
The main term in the asymptotic formula for the second moment is given by $R_1(l;h)$. Using \cite[Eqs.\ 3.3.37, 3.3.38 and p.\ 129]{Mot1}, we find that
\begin{equation}\label{eq:H1}
R_1(l;h)=4\frac{KG}{\pi^{3/2}}\frac{\tau(l)}{\sqrt{l}}\big(\gamma-\log{(2\pi\sqrt{l})}+\log{K}\big)+O\left(\frac{G^3K^{\epsilon}}{Kl^{1/2}}\right).
\end{equation}

Next, according to \cite[p.\ 458]{Ivic2} we have
\begin{equation}\label{eq:H346}
\frac{1}{\sqrt{\pi}G}\int_{T}^{T+H}\big(R_3(l;h)+R_5(l;h)+R_6(l;h)\big)\,dK\ll \frac{l^{1/2}}{G}(lT)^{\epsilon}.
\end{equation}
Applying the standard estimate \eqref{standard estimate},
together with an estimate for the fourth moment of the Riemann zeta function in short intervals (see \cite[p. 14]{Ivic3}), we also show that for $H\gg T^{2/3+\epsilon}$
\begin{equation}\label{eq:H7}
\frac{1}{\sqrt{\pi}G}\int_{T}^{T+H}R_7(l;h)\,dK\ll (H+G)(lT)^{\epsilon}.
\end{equation}
In order to estimate the contribution of $R_7(l;h)$ in the range $$T^{1273/4053+\epsilon}\ll H\ll T^{2/3+\epsilon},$$ we combine the estimate of Bourgain-Watt for the second moment of the Riemann zeta function in short intervals \cite[Theorem 3]{BW} and the subconvexity estimate of Bourgain \cite[Theorem 5]{Bour}
\begin{equation}\label{subc:bourgain}
|\zeta(1/2+it)|\ll |t|^{\theta+\epsilon}, \quad \theta=13/84,
\end{equation}
getting
\begin{equation}\label{eq:H72}
\frac{1}{\sqrt{\pi}G}\int_{T}^{T+H}R_7(l;h)\,dK\ll T^{2\theta}(H+G)(lT)^{\epsilon}.
\end{equation}
Finally, for $H\ll T^{1273/4053+\epsilon}$ we use solely \eqref{subc:bourgain} to show that
\begin{equation}\label{eq:H73}
\frac{1}{\sqrt{\pi}G}\int_{T}^{T+H}R_7(l;h)\,dK\ll T^{4\theta}(H+G)(lT)^{\epsilon}.
\end{equation}
Note that it follows from estimates in \cite[Section 4]{Ivic2} that
\begin{equation*}
\Psi^{-}(m/l;h)\ll \frac{1}{K^2}\left[ \left( \frac{m}{l}\right)^{-3/2+\epsilon}KG^{-1+\epsilon}+\left(\frac{l}{m} \right)^{3/2}GK^{-1+\epsilon}\right].
\end{equation*}
Therefore,
\begin{multline}\label{eq:H4}
\frac{1}{\sqrt{\pi}G}\int_{T}^{T+H}R_4(l;h)\,dK\\ \ll\frac{l^{\epsilon}}{G}\int_{T}^{T+H}\sum_{m=1}^{l-1}\frac{1}{\sqrt{m}K^2}\left[ \left( \frac{m}{l}\right)^{-3/2+\epsilon}KG^{-1+\epsilon}+\left(\frac{l}{m} \right)^{3/2}GK^{-1+\epsilon}\right]dK\\
\ll \frac{l^{3/2+\epsilon}}{G}\int_{T}^{T+H}\left( \frac{G^{\epsilon}}{KG}+\frac{GK^{\epsilon}}{K^3}\right)dK\ll l^{3/2}(lT)^{\epsilon}\left[\frac{H}{TG^2}+\frac{H}{T^3}\right].
\end{multline}

It remains to consider $R_2(l;h)$. According to \cite[Eqs.\ 3.4.20, 3.4.21]{Mot1}, the contribution of $R_2(l;h)$ is negligibly small for all terms with $G^2m/l\gg G^{\epsilon}$.
In order to estimate the contribution of $R_2(l;h)$ for $G^2m/l\ll G^{\epsilon}$, we use the asymptotic formula for $\Psi^{+}(m/l;h)$ proved by Jutila in \cite[p.\ 173]{Jut1}. According to this formula, the main term of $\Psi^{+}(m/l;h) $ is, up to a constant factor, equal to
\begin{multline*}
GK^{1/2}\left(\frac{m}{l}\right)^{1/4}\left(\sqrt{\frac{m}{l}}+\sqrt{1+\frac{m}{l}}
 \right)^{-2iK}\\
\times \exp\left( -G^2\log^2\left[\sqrt{\frac{m}{l}}+\sqrt{1+\frac{m}{l}}\right]\right).
\end{multline*}
We continue by estimating the integral over $K$ in two different ways: first trivially and then using \cite[Lemma 4.3]{T}. More precisely,
\begin{multline*}
\int_{T}^{T+H}K^{1/2}\exp\left(-2iK \log\left(\sqrt{\frac{m}{l}}+\sqrt{1+\frac{m}{l}}\right)\right)dK\\ \ll
\min\left( HT^{1/2},\frac{T^{1/2}}{\log\left(\sqrt{\frac{m}{l}}+\sqrt{1+\frac{m}{l}}\right)} \right)
\ll T^{1/2}\min\left(H, \sqrt{\frac{l}{m}} \right).
\end{multline*}
Consequently,
\begin{multline}\label{eq:H2}
\frac{1}{\sqrt{\pi}G}\int_{T}^{T+H}R_2(l;h)\,dK\ll \sum_{m<lG^{\epsilon-2}}\frac{l^{\epsilon}}{\sqrt{m}}\left(\frac{m}{l}\right)^{1/4}T^{1/2}\min\left(H, \sqrt{\frac{l}{m}} \right)\\
\ll \sum_{m<l/H^2}\frac{HT^{1/2}l^{\epsilon}}{m^{1/4}l^{1/4}}+\sum_{l/H^2\leq m<lG^{\epsilon-2}}\frac{T^{1/2}l^{1/4+\epsilon}}{m^{3/4}}\ll (lT)^{1/2+\epsilon}\left(\frac{1}{\sqrt{H}}+\frac{1}{\sqrt{G}}\right).
\end{multline}

Finally, combining \eqref{eq:H1}, \eqref{eq:H346}, \eqref{eq:H7}, \eqref{eq:H72}, \eqref{eq:H73}, \eqref{eq:H4} and \eqref{eq:H2} completes the proof of Theorem \ref{thm: second moment}.
\end{proof}

\section{Removing the harmonic weight}\label{sec:remove}

In this section, we show that the harmonic weight $\alpha_j=2/L_j(1,\sym^2)$ can be removed at the cost of a negligible error term. With this goal, we apply the techniques of \cite{KM, LW}.

Consider the Dirichlet series
\begin{equation}\label{eq:omegax}
\omega_j(x):=\sum_{n\leq x}\frac{t_j(n^2)}{n},\quad \omega_j(x,y):=\sum_{x < n\leq y}\frac{t_j(n^2)}{n}.
\end{equation}
Recall that we use $\epsilon$ to denote a small positive number, which might change from line to line.

\begin{lem}\label{lem:appwxy}
Let $x=T^{\epsilon}$ and let $y=T^{a}$ for some large $a>0$. Then
there exists a large $b=b(a)>0$ such that
\begin{equation*}
L_j(1,\sym^2)=\zeta(2)\omega_j(x)+\zeta(2)\omega_j(x,y)+O(T^{-b}).
\end{equation*}
\end{lem}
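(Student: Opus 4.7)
The key preliminary observation is that $\omega_j(x)+\omega_j(x,y)=\sum_{n\leq y}t_j(n^2)/n=:\omega_j(y)$, so the split at $x=T^{\epsilon}$ is inessential for \emph{this} particular estimate (it will matter in later sections where the two ranges are weighted differently by the mollifier). The lemma therefore reduces to proving $L_j(1,\sym^2)=\zeta(2)\omega_j(y)+O(T^{-b})$ for $y=T^{a}$ and some $b=b(a)>0$ with $b(a)\to\infty$ as $a\to\infty$. I would attack this via a smoothed Mellin--Perron contour-shift argument.

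Fix a small $\eta\in(0,1/4)$ and a parameter $M$ to be chosen at the end, and pick a smooth cutoff $W=W_M$ with $W\equiv 1$ on $[0,1-1/M]$, $W\equiv 0$ on $[1+1/M,\infty)$, and smooth interpolation. Its Mellin transform $\widetilde W$ has a simple pole at $s=0$ of residue $1$ and, by repeated integration by parts, satisfies $|\widetilde W(\sigma+it)|\ll_N M^{N-1}(1+|t|)^{-N}$ for every $N\geq 1$ and every bounded $\sigma$. Starting from the Dirichlet series identity $\sum_{n\geq 1}t_j(n^2)n^{-1-s}=L_j(1+s,\sym^2)/\zeta(2+2s)$ (valid for $\Re s>0$) and applying Mellin inversion yields
\begin{equation*}
\sum_{n}\frac{t_j(n^2)}{n}W(n/y)=\frac{1}{2\pi i}\int_{(2)}\frac{L_j(1+s,\sym^2)}{\zeta(2+2s)}\widetilde W(s)\,y^{s}\,ds.
\end{equation*}
Shift the contour to $\Re s=-\eta$. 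The pole at $s=0$ contributes $L_j(1,\sym^2)/\zeta(2)$, and the shifted integral is bounded using (i) the decay of $\widetilde W$, (ii) $|\zeta(2-2\eta+2it)|^{-1}\ll 1$, and (iii) a polynomial convexity bound $|L_j(1-\eta+it,\sym^2)|\ll (|t|+\kappa_j)^{C}$ obtained via Phragm\'en--Lindel\"of applied to the functional equation $\Lambda_j(s,\sym^2)=\Lambda_j(1-s,\sym^2)$ combined with Stirling asymptotics of the gamma factors. This yields a contribution of size $\ll y^{-\eta}(M^{1+C}+M\kappa_j^{C})$.

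To pass from the smoothed sum back to $\omega_j(y)$, I would bound the difference by $\sum_{|n-y|\leq y/M}|t_j(n^2)|/n$ and control this using Cauchy--Schwarz together with the Rankin--Selberg estimate $\sum_{n\leq N}t_j(n^2)^{2}\ll N(N\kappa_j)^{\epsilon}$, giving $\ll M^{-1/2}(y\kappa_j)^{\epsilon}$. Using $\kappa_j\leq T$ and optimising $M$ as a small positive power of $y$, the combined error becomes $\ll T^{-\eta a/C_1+C_2}(yT)^{\epsilon}$ for absolute constants $C_1,C_2>0$; since $\eta$ is fixed while $a$ may be taken arbitrarily large, this is $O(T^{-b})$ for any prescribed $b=b(a)\to\infty$ as $a\to\infty$, proving the lemma.

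The main technical obstacle is keeping the contour-shift and the smooth-to-sharp passage uniform in $\kappa_j$ so that $b(a)$ actually grows with $a$; this requires invoking the convexity bound for $L_j(s,\sym^2)$ and the Rankin--Selberg second-moment bound with the correct polynomial dependence on $\kappa_j$, both of which are classical but must be quoted with care to keep all exponents under control.
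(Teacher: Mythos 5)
Your argument is correct, and it follows essentially the same route as the paper: a Perron/Mellin representation of $\sum_{n\le y}t_j(n^2)/n$ via the identity $\sum_n t_j(n^2)n^{-1-s}=L_j(1+s,\sym^2)/\zeta(2+2s)$, a contour shift picking up the residue $L_j(1,\sym^2)/\zeta(2)$ at $s=0$, and a convexity bound for $L_j(\sigma+it,\sym^2)$ that is polynomial in $|t|+\kappa_j$ to control the shifted integral. Your preliminary observation that $\omega_j(x)+\omega_j(x,y)=\omega_j(y)$, so that only the combined sum up to $y$ matters here, is also exactly right. The one substantive difference is in the truncation: the paper uses the sharp effective Perron formula from Tenenbaum, whose error term requires the individual coefficient bound $|t_j(n^2)|\ll n^{2\theta+\epsilon}$ with $\theta=7/64$ (Kim--Sarnak), and then optimizes the truncation height $z=y^{4/7(1/2+2\theta)}$ to obtain the explicit exponent $O(y^{-2/7+6\theta/7+\epsilon})$; you instead use a smooth cutoff, which removes any need for pointwise coefficient bounds, and pay for it at the end by unsmoothing via Cauchy--Schwarz and the Rankin--Selberg mean-square estimate $\sum_{n\le N}t_j(n^2)^2\ll N(N\kappa_j)^{\epsilon}$. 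Your version is marginally more robust (only average information about the coefficients is used) at the cost of an extra step and less explicit exponents, but since the lemma only requires some $b=b(a)\to\infty$, either implementation suffices.
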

\begin{proof}
Similarly to \cite[Lemma 2.3]{LW}, we apply Perron's formula (see \cite[Corollary 2.1, p. 133]{Ten}) with $B(n)=n^{2\theta+\epsilon}$ and $\theta=7/64$. Then, for $y,z\geq 2$, the following asymptotic formula holds:
\begin{multline*}
\sum_{n\leq y}\frac{t_j(n^2)}{n}=\frac{1}{2\pi i}\int_{1/\log{y}-iz}^{1/\log{y}+iz}\frac{L_j(1+s,\sym^2)}{\zeta(2s+2)}\frac{y^s}{s}ds\\+O\left( \frac{y^{\epsilon}}{z}+\frac{y^{2\theta+\epsilon}}{y}\left(1+\frac{y}{z}\log{z}\right)\right).
\end{multline*}
Moving the contour of integration to $\Re{s}=-1/2+\epsilon$, for some $\epsilon>0$, and applying the convexity estimate
\begin{equation*}
L_j(\sigma+it,\sym^2)\ll t^{(1-\sigma)/2+\epsilon}|t^2-\kappa_{j}^{2}|^{(1-\sigma)/2+\epsilon},
\end{equation*}
we obtain
\begin{equation}\label{eq:approxlsym2}
\sum_{n\leq y}\frac{t_j(n^2)}{n}=\frac{L_j(1,\sym^2)}{\zeta(2)}+O\left((yz)^{\epsilon}\left[\frac{y^{2\theta}}{y}+\frac{y^{2\theta}}{z}+\frac{z^{3/4}}{y^{1/2}}\right]\right).
\end{equation}
The optimal choice of $z$, namely $z=y^{4/7(1/2+2\theta)}$,  yields the error term $O(y^{-2/7+6\theta/7+\epsilon})$ in \eqref{eq:approxlsym2}. Finally, the statement follows by choosing $x=T^{\epsilon}$ and $y=T^{a}$.
\end{proof}

\begin{lem}\label{wxyest}
For any $\epsilon>0$, $H \leq T$, $A>60$ and integer $i\geq 1$, we have
\begin{equation*}
\sum_{j}\Omega(\kappa_j)\omega_j(x,y)^{2i}\ll T^{\epsilon}
\end{equation*}
provided that $T^{60}\leq x^i\leq y^i\leq T^A$.
\end{lem}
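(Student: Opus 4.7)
The plan is to represent $\omega_j(x,y)$ as a contour integral of $L_j(1+s,\sym^2)/\zeta(2+2s)$ shifted off the critical line, raise this representation to the $2i$-th power, and bound the resulting multiple spectral average by a combination of H\"older's inequality and moment estimates for symmetric-square $L$-functions.

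First I would apply Perron's formula to the Dirichlet series $\sum_n t_j(n^2) n^{-1-s} = L_j(1+s,\sym^2)/\zeta(2+2s)$ and, proceeding as in the proof of Lemma~\ref{lem:appwxy}, shift the contour from $\Re s > 0$ across the pole at $s = 0$ (whose residue is $L_j(1,\sym^2)/\zeta(2)$) to the line $\Re s = -1/2 + \epsilon$. Differencing the resulting expressions for $\sum_{n \le y}$ and $\sum_{n \le x}$ makes the residue terms cancel, yielding
\begin{equation*}
\omega_j(x,y) = \frac{1}{2\pi i}\int_{(-1/2+\epsilon)} \frac{L_j(1+s,\sym^2)}{\zeta(2+2s)} \cdot \frac{y^s - x^s}{s}\, ds + O(T^{-b})
\end{equation*}
for some large $b = b(A) > 0$. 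The decisive gain is that on this contour $|(y^s - x^s)/s| \ll x^{-1/2+\epsilon}/(1+|\Im s|)$, which contributes a saving of $x^{-1/2+\epsilon}$ for each copy in the $2i$-th power.

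Raising to the $2i$-th power and interchanging sum and integral, the problem reduces to bounding the joint spectral moment
\begin{equation*}
S(\vec s) := \sum_j \Omega(\kappa_j)\prod_{k=1}^{2i} L_j(1+s_k,\sym^2),\quad \Re s_k = -1/2 + \epsilon,
\end{equation*}
which sits just off the critical line. By H\"older's inequality $|S(\vec s)|$ is controlled by the single $2i$-th moment $\sum_j \Omega(\kappa_j)|L_j(1/2+\epsilon+it,\sym^2)|^{2i}$, which I would estimate by $T^{2+\epsilon}(1+|t|)^{O(1)}$ by combining the approximate functional equation for $L_j(\cdot,\sym^2)$ with the Hecke identity $t_j(n)^2 = \sum_{e\mid n} t_j(e^2)$ and an application of the Kuznetsov trace formula (equivalently, the spectral large sieve) to the resulting bilinear forms in the Hecke eigenvalues.

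Assembling these ingredients, the $2i$-fold contour integral converges absolutely and is bounded by $T^{2+\epsilon}\cdot x^{-i(1-\epsilon)}$, so that the hypothesis $x^i \ge T^{60}$ yields
\begin{equation*}
\sum_j \Omega(\kappa_j)\omega_j(x,y)^{2i} \ll T^{2-60+O(\epsilon)} = T^{-58+O(\epsilon)} \ll T^{\epsilon}.
\end{equation*}
The main obstacle is establishing the $2i$-th moment bound $\sum_j \Omega(\kappa_j)|L_j(1/2+\epsilon+it,\sym^2)|^{2i} \ll T^{2+\epsilon}(1+|t|)^{O(1)}$ uniformly in $t$ with acceptable polynomial dependence. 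While the case $i = 1$ amounts to a fairly standard second moment evaluation, extracting good bounds for higher moments requires careful control of the Kloosterman contributions produced by Kuznetsov and of the $\Gamma$-factor weights in the approximate functional equation, especially as $|\Im s_k|$ grows.
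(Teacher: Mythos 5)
Your approach diverges from the paper's at a fundamental step, and the divergence introduces a gap that current technology cannot close. The paper expands $\omega_j(x,y)^{2i}$ directly as a Dirichlet polynomial in the $t_j(n^2)$ using Hecke multiplicativity (following Lau--Wu \cite[Lemmas 2.4--2.5]{LW}), noting that the coefficients remain of size $\ll n^{-1+\epsilon}$, and then applies Luo's spectral large sieve \cite[Eq.\ (34)]{Luo1}, which for such coefficients supported near $N$ yields $\ll (TN)^\epsilon(1+T^5N^{-1/12})$. The hypothesis $x^i\geq T^{60}$ kills the $T^5N^{-1/12}$ term because the linearized polynomial has effective length $N\gg x^i$, and one never needs to know anything about $L$-values on the critical line of $\sym^2$.

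Your proposal instead trades the Dirichlet polynomial for the central $L$-value: after Perron and the contour shift to $\Re s=-1/2+\epsilon$, you have replaced the controllable object $\omega_j(x,y)$ by $L_j(1/2+\epsilon+it,\sym^2)$, and after raising to the power $2i$ and applying H\"older you are forced to estimate
\begin{equation*}
\sum_j\Omega(\kappa_j)\big|L_j(1/2+\epsilon+it,\sym^2)\big|^{2i}.
\end{equation*}
The claimed bound $T^{2+\epsilon}(1+|t|)^{O(1)}$ is a Lindel\"of-on-average assertion at the edge of the critical strip for a degree-three family, and it is not accessible by the approximate functional equation plus the large sieve for $i\geq 2$: the AFE for $L_j(\cdot,\sym^2)$ at $\Re=1/2$ has length $\asymp\kappa_j\sim T$, so the $2i$-fold product is a Dirichlet polynomial of length $\asymp T^{2i}$, which exceeds the family size $\asymp T^2$ as soon as $i\geq 2$; the spectral large sieve then loses a power of $T$ rather than giving the required saving. (Even the $i=1$ case is not ``fairly standard'' at the central point and would need to be cited or proved.) There is also a secondary convergence issue: the $(1+|t|)^{O(1)}$ growth in your moment estimate cannot be absorbed by the $1/(1+|\Im s|)$ from $(y^s-x^s)/s$ and the $\log$ from $1/\zeta(2+2s)$, so the $2i$-fold contour integral does not converge without further truncation or a smoother kernel. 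The paper's route avoids both problems because Luo's large sieve exploits the $n^{-1}$-decay of the coefficients of $\omega_j(x,y)$ and the length $N\gg x^i\geq T^{60}$ directly, never passing through an $L$-value on the critical line.
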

\begin{proof}
Choosing coefficients $c_n\ll n^{-1+\epsilon}$ that are non-zero only in the interval $[N,2N]$, the large sieve inequality of Luo \cite[Eq.\ (34)]{Luo1} takes the form
\begin{equation}\label{luolargesieve}
\sum_{j}\Omega(\kappa_j)\left| \sum_{N\leq n\leq 2N}c_nt_j(n^2)\right|^2\ll (TN)^{\epsilon}(1+T^5N^{-1/12}).
\end{equation}
Repeating the arguments of \cite[Lemma 2.5 (see also Lemma 2.4)]{LW} and using \eqref{luolargesieve}, we obtain
\begin{equation*}
\sum_{j}\Omega(\kappa_j)\omega_j(x,y)^{2i}\ll T^{\epsilon}\sum_{j\ll \log{T}}2^{-j}\left( 1+T^5(x^i2^{-j-1})^{-1/12}\right).
\end{equation*}
This is bounded by $T^{\epsilon}$ provided  that $x^i\geq T^{60}$.
\end{proof}

\begin{lem}\label{lem:wxy}
Let $s_j$ be a sequence of complex numbers such that for $\epsilon>0$ and a sufficiently large constant $A>0$ and $H\leq T$, we have
\begin{equation}\label{eq:cond1}
\sum_{j}\Omega(\kappa_j)\alpha_j|s_j|\ll HT^{1+\epsilon}
\end{equation}
and
\begin{equation}\label{eq:cond2}
\max_{T\leq \kappa_j\leq T+H}|\alpha_js_j|\ll HT^{1-A\epsilon}.
\end{equation}
Then there exists $\delta=\delta(\epsilon,A)>0$ such that for $x=T^{\epsilon_0}$, we have
\begin{equation*}
\sum_{j}\Omega(\kappa_j)s_j=\frac{\zeta(2)}{2}\sum_{j}\Omega(\kappa_j)\alpha_js_j\omega_j(x)+O(HT^{1-\delta}).
\end{equation*}
\end{lem}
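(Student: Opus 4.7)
The plan is to reduce the statement to showing that the tail contribution from $\omega_j(x,y)$ is negligible, then control that tail by Hölder's inequality interpolating between hypothesis \eqref{eq:cond1} and the large sieve bound of Lemma \ref{wxyest}.

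First I would apply Lemma \ref{lem:appwxy} to rewrite the harmonic weight. Since $\alpha_j L_j(1,\sym^2) = 2$, the lemma gives the identity
\begin{equation*}
1 = \frac{\zeta(2)}{2}\alpha_j \omega_j(x) + \frac{\zeta(2)}{2}\alpha_j \omega_j(x,y) + O(\alpha_j T^{-b}).
\end{equation*}
Multiplying by $s_j$ and summing against $\Omega(\kappa_j)$ gives
\begin{equation*}
\sum_j \Omega(\kappa_j) s_j = \frac{\zeta(2)}{2}\sum_j \Omega(\kappa_j)\alpha_j s_j \omega_j(x) + \frac{\zeta(2)}{2} E + O\bigl(T^{-b}\sum_j \Omega(\kappa_j)\alpha_j |s_j|\bigr),
\end{equation*}
where $E := \sum_j \Omega(\kappa_j)\alpha_j s_j \omega_j(x,y)$. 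The $O$-term is $\ll T^{-b}\cdot HT^{1+\epsilon}$ by \eqref{eq:cond1}, which is $\ll HT^{1-\delta}$ as soon as $b$ is chosen large enough (which is allowed, since we may take $a$ large in Lemma \ref{lem:appwxy}).

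The core of the argument is the bound $E \ll HT^{1-\delta}$. For a positive integer $i$ to be chosen, apply Hölder's inequality in the form
\begin{equation*}
|E| \leq \Bigl(\sum_j \Omega(\kappa_j)\alpha_j |s_j|\Bigr)^{1-1/(2i)}\Bigl(\sum_j \Omega(\kappa_j)\alpha_j |s_j|\,|\omega_j(x,y)|^{2i}\Bigr)^{1/(2i)}.
\end{equation*}
The first factor is $\ll (HT^{1+\epsilon})^{1-1/(2i)}$ by \eqref{eq:cond1}. For the second factor I would pull out $\max_j \alpha_j|s_j|\ll HT^{1-A\epsilon}$, using \eqref{eq:cond2} on the essential range of $\Omega$ and absorbing the tails via the rapid decay \eqref{omega2} (combined with Weyl's law and the trivial bounds $\alpha_j \ll \kappa_j^\epsilon$ and a subconvexity estimate on the factors of $s_j$), and then invoke Lemma \ref{wxyest} to obtain $\sum_j \Omega(\kappa_j)|\omega_j(x,y)|^{2i}\ll T^{\epsilon}$. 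The second factor is then $\ll (HT^{1-A\epsilon+\epsilon})^{1/(2i)}$. Multiplying the two factors yields
\begin{equation*}
|E| \ll H\cdot T^{1+\epsilon - A\epsilon/(2i)}.
\end{equation*}
Now I choose $i$ so that $x^i = T^{i\epsilon_0}\ge T^{60}$, i.e., $i \ge 60/\epsilon_0$, so Lemma \ref{wxyest} applies, and then require $A > 2i$ (this is what ``$A$ sufficiently large'' ultimately means, making $A\epsilon/(2i) > \epsilon$). Setting $\delta := A\epsilon/(2i)-\epsilon > 0$ completes the proof.

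The main obstacle is the second step: the pointwise hypothesis \eqref{eq:cond2} is only stated for $T\le \kappa_j \le T+H$, so I have to argue that the contribution of $\kappa_j$ outside this range to $\sum_j \Omega(\kappa_j)\alpha_j |s_j||\omega_j(x,y)|^{2i}$ is harmless. The transition regions of length $\ll G\sqrt{\log T}$ are handled by the uniform bound \eqref{omega3}, together with the same Hölder/large-sieve strategy restricted to that thin strip (the volume factor from Weyl's law is negligible compared with $HT$), while the far tails $|\kappa_j - T|, |\kappa_j - T - H| \gg G\sqrt{\log T}$ are killed by the $T^{-A}$ decay of $\Omega$ against any polynomial bound on $\alpha_j|s_j||\omega_j(x,y)|^{2i}$. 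Everything else is a bookkeeping exercise in bounding the parameters.
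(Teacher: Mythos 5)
Your proposal is correct and follows essentially the same route as the paper: approximate $L_j(1,\sym^2)$ by $\zeta(2)\bigl(\omega_j(x)+\omega_j(x,y)\bigr)$ via Lemma \ref{lem:appwxy}, and bound the error term $E=\sum_j\Omega(\kappa_j)\alpha_j s_j\omega_j(x,y)$ by Hölder, combining hypothesis \eqref{eq:cond1}, the pointwise bound \eqref{eq:cond2}, and the large sieve estimate of Lemma \ref{wxyest}. Your Hölder step treats $\Omega(\kappa_j)\alpha_j|s_j|$ as the measure and $|\omega_j(x,y)|$ as the integrand, whereas the paper treats $\Omega(\kappa_j)$ as the measure and applies Hölder to the pair $(|\omega_j(x,y)|,|\alpha_j s_j|)$ with exponents $(2p,q)$ satisfying $1/(2p)+1/q=1$; after pulling out the max from the $q$-th power factor the two computations collapse to the identical bound $|E|\ll H\,T^{1+\epsilon-A\epsilon/(2i)}$, so the difference is purely cosmetic. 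Your final parameter count ($i\ge 60/\epsilon_0$ from Lemma \ref{wxyest}, then $A>2i$) matches the paper's constraint $120/\epsilon_0<2p<A$. You are, if anything, more careful than the paper on one point: you explicitly note that \eqref{eq:cond2} only bounds $|\alpha_j s_j|$ for $\kappa_j\in[T,T+H]$ while the sum over $j$ is unrestricted, and you sketch how the transition and tail regions are to be absorbed using \eqref{omega2}--\eqref{omega3}; the paper's proof performs the same Hölder manipulation but leaves this range issue implicit. So this is the same argument, and the extra remark about the tails is a genuine, if minor, improvement in rigor.
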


\begin{proof}
It follows from the definition of $\alpha_j$ (see \eqref{alphaj}) that
\begin{equation*}
\sum_{j}\Omega(\kappa_j)s_j=\frac{1}{2}\sum_{j}\Omega(\kappa_j)\alpha_js_jL_j(1,\sym^2).
\end{equation*}
We first approximate $L_j(1,\sym^2)$ by applying Lemma \ref{lem:appwxy}. Then, using H\"{o}lder's inequality with $1/(2p)+1/q=1$ together with Lemma \ref{wxyest} and conditions \eqref{eq:cond1} and \eqref{eq:cond2},  we show that
\begin{multline}
\sum_{j}\Omega(\kappa_j)\alpha_j s_j\omega_j(x,y)\ll \left( \sum_{j}\Omega(\kappa_j)|\omega_j(x,y)|^{2p}\right)^{1/(2p)} \left(\sum_{j}\Omega(\kappa_j)|\alpha_j s_j|^{q}\right)^{1/q}\\
\ll T^{\epsilon/(2p)}\big(\max_{T\leq \kappa_j\leq T+H}|\alpha_js_j|\big)^{(q-1)/q}\left(\sum_{j}\Omega(\kappa_j)|\alpha_j s_j|\right)^{1/q}\ll HT^{1-\delta}
\end{multline}
for some $\delta>0$, provided that  $p$ and $A$ are chosen such that $120/\epsilon_0<2p<A$. This concludes the proof.
\end{proof}

\section{Mollification and non-vanishing}\label{sec:moll}

In order to determine a proportion of non-vanishing for the central $L$-values, we apply the method of Kowalski and Michel developed in \cite{K, KM, KM2}. In particular, we choose mollifiers of the form
\begin{equation}\label{eq:mollifier}
\mathcal{M}(j)=\sum_{m\leq T^{\Delta}}\frac{x_m}{\sqrt{m}}t_{j}(m),
\end{equation}
where $\Delta$ is called the logarithmic length of the mollifier and $\{x_m\}_{m=1}^{\infty}$ is a sequence of real numbers supported on squarefree numbers $m\leq T^{\Delta}$ and satisfying $x_m\ll m^{\epsilon}$ (see  \cite[Section 5]{KM2}). Similarly to \cite[Lemma 8.1]{BF}, it can be shown that for any $\Delta<1-\epsilon$ and $H\leq T$, we have
\begin{equation}\label{eq:squaremollifier}
\sum_{j}\Omega(\kappa_j)\alpha_j \mathcal{M}^2(j)\ll HT^{1+\epsilon}.
\end{equation}

Our goal is to compute the mollified moments
\begin{equation*}
\mathfrak{M}_n(T):=\sum_{\epsilon_j=1}\Omega(\kappa_j)\big(\mathcal{M}(j)L_{j}(1/2)\big)^n \qquad\text { for }n=1,2.
\end{equation*}
In order to apply Lemma \ref{lem:wxy}, we need to verify that the conditions \eqref{eq:cond1} and \eqref{eq:cond2} with $s_j=(\mathcal{M}(j)L_{j}(1/2))^n$ hold for $n=1,2$. Estimating \eqref{eq:mollifier} trivially and applying  \eqref{eq:subcJutila} and \eqref{eq:boundsalphaj}, we find that \eqref{eq:cond2} is satisfied for any
\begin{equation}\label{rest1}\Delta \leq \frac{1/3+\beta-A\epsilon}{1+2d},
\end{equation}
where $d:=7/64$ and $H=T^{\beta}$. Next, in order to verify the condition \eqref{eq:cond1} for the first moment, we apply the Cauchy-Schwarz inequality and use \eqref{eq:squaremollifier} and Theorem \ref{thm: second moment}. Finally, following the arguments of \cite[Section 5.2]{KM2} (see also \cite[Lemma 8.5]{BF}) and using Theorem \ref{thm: second moment} and Remark \ref{rem:sm}, we show that \eqref{eq:cond1} is satisfied also for $s_j=(\mathcal{M}(j)L_{j}(1/2))^2$ as long as \begin{equation}\label{rest2}\Delta\leq\min(1-\alpha;1/4+3/4\beta),
\end{equation}
where $\alpha$ is defined by \eqref{eq:choiceofalpha} and $H=T^{\beta}$.
We remark that for any $\beta\geq 0$ we have
$$1/4+3/4\beta<\frac{1/3+\beta-A\epsilon}{1+2d}.$$
Therefore, the condition \eqref{rest1} is contained in \eqref{rest2}.
We conclude that, for $x=T^{\epsilon}$ and some $\delta>0$,
\begin{equation*}
\mathfrak{M}_n(T)=\frac{\zeta(2)}{2}\sum_{\epsilon_j=1}\Omega(\kappa_j)\alpha_j \mathcal{M}^n(j)L_{j}^{n}(1/2)\omega_j(x)+O(HT^{1-\delta}).
\end{equation*}

The next step is to evaluate $\mathfrak{M}_n(T)$ for $n=1,2$. Using \eqref{eq:mollifier} and  \eqref{eq:omegax}, we find that the second moment is equal to
\begin{multline*}
\mathfrak{M}_2(T)=\frac{\zeta(2)}{2}\sum_{d\leq x}\frac{1}{d}\sum_{b}\frac{1}{b}\sum_{m_1,m_2}\frac{x_{m_1b}x_{m_2b}}{\sqrt{m_1m_2}}\\
\times \sum_{r|(d^2,m_1m_2)}M_2\left(\frac{d^2m_1m_2}{r^2}\right)
+O(HT^{1-\delta}).
\end{multline*}
According to  Theorem \ref{thm: second moment}, the error term in the asymptotic formula for $M_2(d^2m_1m_2/r^2)$ contributes $O(HT^{1-\delta})$ to $\mathfrak{M}_2(T)$ for any  $\Delta$ satisfying  \eqref{rest2} and some $\delta>0$. To evaluate the contribution of the main term, we follow the arguments of \cite[Section 5]{KM2}. Consequently,
for any $\Delta$ satisfying  \eqref{rest2}, we have
\begin{multline*}
\mathfrak{M}_2(T)=\frac{\zeta(2)^3}{\pi^2}\int_{T}^{T+H}K\frac{\log{K^2}+2\log{T^{\Delta}}}{\log{T^{\Delta}}}\,dK+O(HT^{1-\delta})\\
=\frac{\zeta(2)^3}{\pi^2}(2TH+H^2)\frac{1+\Delta}{\Delta}+O(HT^{1-\delta}).
\end{multline*}
Similarly, for any $\Delta<1-\epsilon$, we obtain
\begin{equation*}
\mathfrak{M}_1(T)=\frac{\zeta(2)^2}{\pi^2}\int_{T}^{T+H}K\,dK+O(HT^{1-\delta})=\frac{\zeta(2)^2}{2\pi^2}\left(2TH+H^2\right)+O(HT^{1-\delta}).
\end{equation*}
Finally, we are ready to finish the proof of Theorems \ref{thm:main} and \ref{thm:main2}.

\begin{proof}[Proof of Theorem \ref{thm:main2}]
Using \eqref{eq:weyl}, we find that
\begin{equation}
\#\{u_j: T\leq \kappa_j\leq T+H \text{ and } \epsilon_j=1\}\sim \frac{2TH+H^2}{24}.
\end{equation}
As a consequence of the Cauchy-Schwarz inequality, for any $\Delta$  satisfying \eqref{rest2}, we have
\begin{equation*}
\frac{24}{2TH+H^2}\sum_{L_{j}(1/2)\neq 0}\Omega(\kappa_j)\geq \frac{24}{2TH+H^2} \frac{\mathfrak{M}_{1}^{2}(T)}{\mathfrak{M}_2(T)}\geq \frac{\Delta}{1+\Delta}-\epsilon.
\end{equation*}
Applying Lemma \ref{lem:removingsmoothing}, we prove Theorem \ref{thm:main2}.
\end{proof}
\begin{proof}[Proof of Theorem \ref{thm:main}]
Taking $H=T$ and using a dyadic decomposition yields that for any $\Delta<1-\epsilon$, we have
\begin{equation*}
\frac{24}{T^2}\sum_{\substack{\kappa_j\leq T\\L_{j}(1/2)\neq 0}}1\geq \frac{\Delta}{1+\Delta}-\epsilon.
\end{equation*}
Theorem \ref{thm:main} follows by taking the largest admissible $\Delta$ and using the asymptotic formula \eqref{eq:weyl}.
\end{proof}


\nocite{}

\end{document}